\documentclass[12pt]{amsart}
\usepackage[T1]{fontenc}
\usepackage[utf8]{inputenc}
\usepackage{graphicx}
\usepackage{amssymb}
\usepackage{amsmath}
\usepackage{amsthm,amsfonts,bbm}
\usepackage{amscd}
\usepackage{geometry}
\usepackage{mathtools}
\usepackage{epsfig,epstopdf}
\usepackage{mathrsfs}
\usepackage[backref,colorlinks=true,linkcolor=blue,citecolor=blue,urlcolor=blue]{hyperref}
\usepackage{color}
\usepackage{tikz}
\usepackage{fixme}

\theoremstyle{plain}
\newtheorem{thm}{Theorem}[section]
\newtheorem{coro}[thm]{Corollary}
\newtheorem{lemma}[thm]{Lemma}
\newtheorem{prop}[thm]{Proposition}

\theoremstyle{definition}

\newtheorem{rmk}[thm]{Remark}

\numberwithin{equation}{section}
\numberwithin{figure}{section}
\def\B{{\sf B}}
\def\d{\delta}
\def\deg{\operatorname{deg}}
\def\dn{\mathrm d}
\def\down{\mathrm{down}}
\def\ex{\operatorname{ex}}
\def \im{\operatorname{im}}
\def \lk{\operatorname{lk}}
\def\p{\partial}
\def\q{\mathfrak q}
\def\R{\mathbb R}

\def\sp{\operatorname{spex}}
\def\supp{\operatorname{supp}}
\def\un{\mathrm u}
\def\up{\mathrm{up}}

\begin{document}

\title[Spectral Radius of Simplicial Complexes]{Signless Laplacian Spectral Radius and Link Homology of Simplicial Complexes}

\author[Y.-Z. Fan]{Yi-Zheng Fan*}
\address{\small Center for Pure Mathematics, School of Mathematical Sciences,  Anhui University, Hefei 230601, P. R. China}
\email{fanyz@ahu.edu.cn}
\author[H.-Z. Zhang]{Huan-Zhi Zhang}
\address{\small School of Mathematical Sciences, Anhui University, Hefei 230601, P. R. China}
\email{zhanghz@stu.ahu.edu.cn}
\thanks{*The corresponding author.
Supported by National Natural Science Foundation of China (No. 12331012).
}

\subjclass[2020]{Primary 05E45, 15A18; Secondary 05C65, 05C35}

\keywords{Simplicial complex; signless Laplacian; spectral radius; link; homology; generalized wheel; generalized book}

\begin{abstract}
In this paper, we study the signless Laplacian spectral radius of pure simplicial complexes under local homological restrictions on links.
Let $K$ be a pure $r$-dimensional complex on $n$ vertices, $\q_{r-1}(K)$ be the spectral radius of the $(r-1)$-up signless Laplacian of $K$, and $\lk_K(\sigma)$ be the link of a face $\sigma$ in $K$.
We prove that if the homology $\widetilde H_t(\lk_K(\sigma), \R)=0$ for every face $\sigma\in K$ with $|\sigma|=r-t$, then
\[ \q_{r-1}(K)\le tn-(t-1)(r+1).\]
Moreover, if $K$ is $r$-down path connected and $n\ge r+2+\binom{r+1}{t}\binom{r}{t}$, equality holds if and only if $K \cong \Delta_{r+1-t} \star \Delta_{n-r-1+t}^{t}$,
where $\Delta_n$ denotes a simplex on $n$ vertices, $\Delta_n^{p}$ denotes the $(p-1)$-skeleton of $\Delta_n$, and $\star$ denotes the join of two complexes.
\end{abstract}

\maketitle

\section{Introduction}
\subsection{Classical and high-dimensional Tur\'an problems}
In extremal combinatorics, the classical Tur\'an problem seeks to determine the maximum number of edges in an $n$-vertex graph that does not contain a given forbidden subgraph $F$.
The Tur\'an number of a graph $F$, denoted by $\ex(n,F)$, is defined as the maximum number of edges in an $F$-free graph on $n$ vertices.
This area was pioneered by Mantel and Tur\'an \cite{Turan1941}, and it has since become a cornerstone of discrete mathematics.
A milestone in this field is the Erd\H{o}s-Stone-Simonovits theorem \cite{ES1966, Simon1968}, which determines the asymptotic behavior of the Tur\'an number for any non-bipartite graph $F$ based on its chromatic number.

The classical Tur\'an problem naturally extends to hypergraphs and higher-dimensional structures.
An \emph{abstract simplicial complex}, or simply a \emph{complex}, is a collection $K$ of finite sets closed under inclusion.
An element of cardinality $r+1$ is called an \emph{$r$-face} or an \emph{$r$-simplex} with dimension $r$.
The faces that are maximal under inclusion are called \emph{facets}.
An $r$-dimensional complex is \emph{pure} if all its facets have dimension $r$.
Let $\Delta_{n}$ denote an $(n-1)$-simplex, which can also be considered a simplicial complex on $n$ vertices with all possible subsets as faces, and let $\Delta_n^{r+1}$ denote the $r$-skeleton of $\Delta_n$, namely, the subcomplex consisting of all faces of $\Delta_n$ of dimension at most $r$.
Thus, a pure $r$-dimensional complex may be viewed as an $(r+1)$-uniform hypergraph together with all lower-dimensional faces.
As part of the program of high-dimensional combinatorics, Linial \cite{Lini2008, Lini2018, LL2016, LP2016} proposed the following problem:
Given an $r$-dimensional complex $K$, how many facets can an $r$-dimensional complex on $n$ vertices have if it contains no homeomorphic copy of $K$?

When $r=1$, this problem reduces to the classical Tur\'an problem for graphs.
It is thus a generalization of the Tur\'an problem to higher dimensions.
For an $r$--dimensional complex $K$, let $\ex_r(n,K)$ denote the maximum number of facets in an $r$-complex on $n$ vertices that contains no copy of $K$ as a subcomplex.
The higher-dimensional Tur\'an problem is already difficult in small dimensions.
A prime example is the $2$-dimensional complex $\Delta_4^{3}$.
Tur\'an \cite{Turan1941} posed a conjecture on the exact value of $\ex_2(n , \Delta_4^{3})$, which is still unknown to this day.
Long, Narayanan, and Yap proved the following general upper bound for homeomorphic Tur\'an problems.

\begin{thm}[Long-Narayanan-Yap \cite{LNY2022}] \label{LNY2022}
For every $r\in\mathbb N$, there exists a constant $\lambda_r\ge r^{-2r^2}$ such that, for every pure $r$-complex $K$, there is a constant $C=C(K)$ satisfying
$$
	\ex_r(n,K)\le C n^{r+1-\lambda_r}.
$$
\end{thm}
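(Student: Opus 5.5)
Since Theorem~\ref{LNY2022} is quoted from \cite{LNY2022} and used as background, I would reconstruct its proof along the lines of a density-increment and gluing argument rather than anything spectral. First, fix $K$ and take a subdivision of it. Every pure $r$-complex $K$ is homeomorphic to a complex assembled from a bounded number of standard ``tiles''. A natural choice of tile is the boundary of the $(r+1)$-dimensional cross-polytope, or more generally a complete $(r+1)$-partite complex with parts of size $2$. These tiles are glued along $(r-1)$-dimensional pieces according to a bounded combinatorial pattern depending only on $K$. So it suffices to show the following: an $r$-complex $L$ on $n$ vertices with at least $Cn^{r+1-\lambda_r}$ facets contains a homeomorphic copy of any fixed gluing pattern of such tiles, where subdivided paths of tiles are allowed to realise the gluings.

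Second, I would produce many tiles. Erd\H{o}s' theorem on complete $(r+1)$-partite $(r+1)$-uniform hypergraphs gives $\ex(n,K^{(r+1)}(t,\dots,t))\le n^{r+1-t^{-r}}$. By supersaturation, a complex whose density is above this threshold contains a positive proportion of the expected number of copies of $K^{(r+1)}(2,\dots,2)$. I would then pass to a ``regular'' subcomplex. In it, every $(r-1)$-face that lies in one tile lies in many tiles, and this holds robustly, so that sets of $O_K(1)$ vertices can be avoided. This cleaning step is a standard iterated deletion: repeatedly remove $(r-1)$-faces of low tile-degree, losing at most half the tiles in total.

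Third, I would build the homeomorph greedily. I would embed the tiles of the pattern one at a time. When two tiles must share an $(r-1)$-dimensional piece that cannot be made to coincide directly, I would connect them through a chain of tiles, which amounts to a subdivision. The robust high tile-degree lets each new tile avoid all previously used vertices. Because only $O_K(1)$ steps are needed, the loss in density at each step is polynomial. Tracking the exponents through $r$ levels of dependent-random-choice-type arguments would give an exponent saving of the form $r^{-O(r^2)}$, which is compatible with the stated bound $\lambda_r\ge r^{-2r^2}$. The constant $C(K)$ absorbs the number of steps.

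The main obstacle is the connection step. I must show that two given $(r-1)$-dimensional ``ports'' in the cleaned complex can be joined by a topologically correct tube of tiles. This means the union must be homeomorphic to the intended gluing, not merely connected. The tube must also be disjoint from everything already embedded. I would first try an induction on dimension by passing to links. The link of a vertex in a dense $r$-complex is a dense $(r-1)$-complex, and a tube in the link, coned off, yields the required $r$-dimensional connection. The difficulty is keeping enough density in links, uniformly, after the cleaning, and this is exactly where the $r^{-2r^2}$ exponent loss would be incurred.
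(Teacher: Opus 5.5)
The paper does not prove this statement. It is quoted from \cite{LNY2022} as background, so your sketch has to stand on its own. As it stands it has a genuine gap exactly where the theorem has its content: the independence of $\lambda_r$ from $K$.

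A $K$-dependent saving is trivial. Colour each vertex of the barycentric subdivision $\mathrm{sd}(K)$ by the dimension of the face of $K$ it represents. Then every facet of $\mathrm{sd}(K)$ is rainbow, so its facets form a subhypergraph of the complete $(r+1)$-partite $(r+1)$-graph $K^{(r+1)}(t,\dots,t)$ with $t=\max_i f_i(K)$. Erd\H{o}s's theorem then already gives $\ex_r(n,K)\le n^{r+1-t^{-r}}$.

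Your scheme, as written, yields only a bound of this $K$-dependent kind. It performs $O_K(1)$ embedding and connection steps, each costing a polynomial factor of density. A constant $C(K)$ can absorb a bounded number of constant-factor losses, but not $m(K)$ losses of order $n^{-\varepsilon}$, which leave an exponent saving depending on $K$. The phrase ``tracking the exponents through $r$ levels'' has nothing behind it, since nothing in the scheme has $r$ levels. Uniformity could only come from running all connections inside one cleaned complex at no further exponent cost, and that is precisely the connection step you leave open.

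The route you suggest for that step does not work as stated.
\begin{itemize}
\item A cone $v\star P$ over a tube $P\subseteq\lk(v)$ has $v$ in every facet, so it only joins ports that both contain $v$.
\item Two such cones, or a cone and the already embedded part, that share $v$ are identified at $v$, which changes the homeomorphism type.
\item High tile-degree of every $(r-1)$-face is only local expansion and gives no disjoint routes between prescribed ports. Already for $r=1$, large minimum degree does not let you join chosen branch vertices by internally disjoint paths (there may be cut vertices). This is why Mader's theorem needs a genuine connectivity argument.
\end{itemize}

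The missing idea, used in \cite{LNY2022}, is to abandon piece-by-piece gluing altogether. One shows that every $r$-complex has a homeomorph inside (the complex generated by) an $(r+1)$-partite $(r+1)$-graph that is $d$-trace-bounded, with $d$ depending only on $r$. This means that for each $i\ge 2$, every vertex of the $i$-th class has degree at most $d$ in the trace on the first $i$ classes. One then proves $\ex(n,H)\le_H n^{r+1-\alpha}$ for every such $H$, with $\alpha$ depending only on $d$ and $r$. This is in the spirit of the F\"uredi and Alon--Krivelevich--Sudakov bound $O(n^{2-1/d})$ for bipartite graphs whose vertices on one side have degree at most $d$. The homeomorph is embedded in one shot, and the exponent depends on a bounded-degree parameter rather than on the size of the pattern.

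Two smaller points.
\begin{itemize}
\item Your first step is false as written. In a union of cross-polytope boundaries glued only along faces of dimension at most $r-1$, no $r$-faces are identified. Hence the tiles' fundamental cycles survive as nonzero $r$-cycles, and the result has nonzero top homology. It is therefore never homeomorphic to, say, a single $r$-simplex; you should ask only for a pattern \emph{containing} a homeomorph of $K$.
\item For $r=1$ the statement demands $\lambda_1\ge 1$, i.e.\ Mader's linear bound \cite{Mader1967}. The $1$-dimensional tile is $C_4$ and $\ex(n,C_4)=\Theta(n^{3/2})$. A scheme that must first find many tiles therefore cannot reach this case, and it has to be handled separately.
\end{itemize}
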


The existence of such universal exponents $\lambda_r$ as in Theorem \ref{LNY2022} was previously known only for $r = 1$ and $r = 2$; that the optimal value of $\lambda_1$ is $1$ is a classical result of Mader \cite{Mader1967}, and that $\lambda_2 \geq 1/5$ was shown recently by Keevash, Long, Narayanan, and Scott \cite{KLNS2021}.
It was conjectured that the optimal value of $\lambda_2$ is $1/2$, which remains open.

Let $\mathcal{F}$ be a family of $r$-dimensional complexes, and let $\ex_r(n,\mathcal{F})$ denote the maximum number of facets in an $r$-dimensional complex on $n$ vertices that contains no member of $\mathcal{F}$ as a subcomplex.
The case of two-dimensional surfaces has also been studied intensively.
A classical result of Brown, Erdős, and Sós \cite{SEB1973} first established that $\ex_2(n, S^2) = \Theta(n^{5/2}) $, where $S^r$ denotes an $r$-dimensional sphere.
Kupavskii, Polyanskii, Tomon, and Zakharov  \cite{KPTZ2022} extended this order of magnitude to triangulations of all closed orientable surfaces.
Sankar  \cite{San2024} subsequently obtained the corresponding result for non-orientable surfaces.
These results illustrate the role of topological forbidden configurations in extremal combinatorics.	
More recently, Newman and Pavelka \cite{NP2024} provided a conditional lower bound and also an upper bound for $\ex_r(n,S^r)$.
A systematic investigation of non-pure extremal problems was recently initiated by Conlon, Piga, and Sch\"ulke \cite{CPS2023} and was further studied by Axenovich et al. \cite{AGLP2025}.
For more related work, see the survey  \cite{K2011, Niki2011}.

\subsection{Spectral Tur\'an problem for complexes}
In recent decades, a spectral analog of the Tur\'an problem has attracted significant attention.
Usually, the spectral radius of a graph $G$ refers to  the spectral radius of the adjacency matrix $A(G)$ of $G$, denoted by $\lambda(G)$.
Let $\sp(n,F)$ denote the maximum spectral radius among all $n$-vertex graphs that do not contain $F$ as a subgraph.
Initiated by Nikiforov \cite{Niki2007}, the spectral Tur\'an problem originally focused on determining the maximum adjacency spectral radius of an $F$-free graph on $n$ vertices.
Nikiforov established several foundational results, including the spectral versions of Tur\'an's theorem and the Erd\H{o}s-Stone-Simonovits theorem \cite{Niki2007, Niki2009}.
The spectral Tur\'an problem is closely related to  the Tur\'an problem by the following connection: $\lambda(G) \ge {2e(G)}/{n}$ for a graph $G$ on $n$ vertices, which implies that $ \ex(n,G) \le {n \cdot \sp(n,G)}/{2}.$
Subsequently, the study of spectral Tur\'an problems was naturally extended to the Laplacian and signless Laplacian matrices.
For surveys, see \cite{LB2023,LLF2022}.

It is therefore natural to ask for spectral analogs of these high-dimensional
Tur\'an problems.
The spectral theory of combinatorial Laplacians on simplicial complexes goes back to Eckmann's discrete Hodge theorem \cite{E1944}.
Horak and Jost developed a general framework for Laplace operators on simplicial complexes \cite{HJ2013a}.
The spectral Tur\'an problem for simplicial complexes via the signless Laplacian was initiated by Fan, She, and Zhang \cite{FSZ2025}.
For a pure $r$-dimensional complex $K$, let $\q_{r-1}(K)$ denote the spectral radius of its $(r-1)$-up signless Laplacian.
Fan, She, and Zhang \cite{FSZ2025} determined the maximum value of $\q_{r-1}(K)$ for complexes without $r$-dimensional holes.
In a subsequent paper, She, Fan, and Song \cite{SFS2026} established an asymptotic formula for the maximum signless Laplacian spectral radius of $r$-dimensional pure simplicial complexes on $n$ vertices with prescribed second Betti number $t$ for $1 \le t \le n-3$ and determined the extremal complexes when $t$ equals $1$ or $2$.
Recently, Zhang and Fan \cite{ZF2026} provided the value of $\q_{r-1}(K)$ for complexes without $r$-dimensional wheels and determined the extremal complexes.

In addition to the spectral radius, the smallest nonzero eigenvalue of the complex Laplacian, referred to as the spectral gap, has also been widely investigated.
Lew \cite{AL2020} gave the lower bound of the spectral gap of the Laplacian for $r$-dimensional complexes without $d$-dimensional holes and proposed a conjecture on the extremal case of the $r$-dimensional Laplacian.
Recently, Zhan, Huang, and Lin \cite{ZHL2026} resolved this conjecture.

\subsection{Link homology viewpoint}
The homology of links plays an important role in the study of simplicial complexes.
A classical source of this viewpoint is Cohen-Macaulay theory.
Let $K$ be a simplicial complex.
For a face $\sigma\in K$, the \emph{link} of $\sigma$ in $K$ is
\[
\lk_K(\sigma) =\{\tau\in K:\tau\cap \sigma=\emptyset,\ \tau\cup\sigma\in K\}.
\]
By Reisner's criterion \cite{Reisner1976}, a simplicial complex $K$ is \emph{Cohen-Macaulay} over a field $\mathbb F$ if and only if, for every face
$\sigma\in K$,
\[
 \widetilde H_i(\lk_K(\sigma), \mathbb F)=0, ~ \text{for all } i<\dim \lk_K(\sigma).
\]
That is, a simplicial complex is Cohen-Macaulay over  $\mathbb F$ if and only if the homology of each face’s link vanishes below its top dimension.
This criterion is one of the basic connections between the topology of
simplicial complexes and Stanley-Reisner theory; see survey \cite{FJJ2014}.

The condition studied in the present paper is inspired by this
link-homology viewpoint.
Let $K$ be a pure $r$-dimensional simplicial complex.
For a fixed integer $t$ with $1\le t\le r$, we impose the following
fixed-codimension top-link homology condition:
\[
\widetilde H_t(\lk_K(\sigma), \R)=0, ~ \text{for every face~} \sigma\in K ~ \text{with~} |\sigma|=r-t.
\]
Since $K$ is pure $r$-dimensional, each such link has dimension exactly $t$.
Thus, the condition asks for the vanishing of the top-dimensional homology of these fixed-codimension links.
This is the bridge from local topology to spectral extremal theory.

\subsection{Main result and proof technique}
We first introduce two classes of simplicial complexes constructed by join.
The \emph{join} of two complexes $X$ and $Y$ on disjoint vertex sets is the complex
\[ X\star Y=\{\sigma\cup\tau:\sigma\in X,\ \tau\in Y\}.\]
For $1\le t\le r$, define 
$\B_n(r,t) = \Delta_{r+1-t}\star \Delta_{n-r-1+t}^{t}$.
We call $\B_n(r,t)$ the \emph{generalized $(r,t)$-book} on $n$ vertices.
Equivalently, $\B_n(r,t)$ is the pure $r$-dimensional complex whose every $r$-face contains a fixed core of size $r+1-t$ and whose remaining $t$ vertices are chosen arbitrarily from the other $n-r-1+t$ vertices.
For example, when $r=2$ and $t=1$, $\B_n(2,1)$ is the usual book of triangles sharing a common edge, and the $1$-skeleton of $\B_n(2,1)$ is exactly the book graph.
When $r=2$ and $t=2$, $\B_n(2,2)$ is the cone over the complete graph on $n-1$ vertices.

For $1\le t\le r$, a \emph{generalized $(r,t)$-wheel} is an $r$-dimensional complex of the form $\Delta_{r-t} \star H$,
where $H$ is a $t$-dimensional basic hole on a vertex set disjoint from $\Delta_{r-t}$; for the notion of  basic holes; see Section \ref{sec-2.3}.

Our main theorem is as follows.

\begin{thm}\label{main}
Let $1\le t\le r$, and let $K$ be a pure $r$-dimensional complex on $n$ vertices.
Suppose that $\widetilde H_t(\lk_K(\sigma), \R)=0$ for every face $\sigma\in K$ with $|\sigma|=r-t$.
Then
\[	\q_{r-1}(K)\le tn-(t-1)(r+1).	\]
Moreover, if $K$ is $r$-down path connected and $n\ge r+2+\binom{r+1}{t}\binom{r}{t}$, then equality holds if and only if $K\cong \B_n(r,t)$.
\end{thm}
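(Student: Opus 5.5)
The plan is to work with the facet side of the operator. Write $B$ for the unsigned boundary (incidence) matrix between the $(r-1)$-faces and the $r$-faces of $K$. Then the $(r-1)$-up signless Laplacian is $BB^{T}$, and it has the same nonzero spectrum as $B^{T}B$. The matrix $B^{T}B$ is indexed by the facets. Its diagonal entries all equal $r+1$, and it has off-diagonal entry $1$ exactly when two facets share an $(r-1)$-face. Since $B^{T}B$ is nonnegative, $\q_{r-1}(K)$ is at most its maximum row sum. For a facet $F$ this row sum equals $\sum_{\tau\subset F,\,|\tau|=r}\deg_K(\tau)$. Now write $\deg_K(\tau)=1+|\{v\notin F:\tau\cup\{v\}\in K\}|$. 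Exchanging the order of summation gives
\[
\sum_{\tau\subset F}\deg_K(\tau)=(r+1)+\sum_{v\notin F} c_F(v),\qquad c_F(v)=|\{\tau\subset F:\ |\tau|=r,\ \tau\cup\{v\}\in K\}|.
\]

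The key local step is to show $c_F(v)\le t$ for every facet $F$ and every vertex $v\notin F$; this is where the link hypothesis enters. Suppose instead that $c_F(v)\ge t+1$. Choose a set $S\subseteq F$ with $|S|=t+1$ such that $(F\setminus\{s\})\cup\{v\}\in K$ for every $s\in S$, and put $\sigma=F\setminus S$, so $|\sigma|=r-t$. Then $\lk_K(\sigma)$ contains the $t$-face $S$ and the $t$-faces $(S\setminus\{s\})\cup\{v\}$ for $s\in S$. These are all $t+2$ facets of the boundary of the $(t+1)$-simplex on $S\cup\{v\}$. Their sum is a nonzero $t$-cycle in the $t$-dimensional complex $\lk_K(\sigma)$. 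Because there are no $(t+1)$-faces in the link, this cycle gives $\widetilde H_t(\lk_K(\sigma),\R)\neq 0$, a contradiction. Hence every row sum is at most $(r+1)+t(n-r-1)=tn-(t-1)(r+1)$, which proves the inequality. One also checks that in $\B_n(r,t)$ every facet $F$ meets every outside vertex $v$ in exactly $t$ ridges, namely those omitting one of the $t$ non-core vertices of $F$. Its links of $(r-t)$-sets are cones or simplex skeleta of dimension $t$ with zero top homology, so $\B_n(r,t)$ attains the bound.

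For the equality case, $r$-down path connectedness makes $B^{T}B$ irreducible. Equality of the spectral radius with the maximum row sum then forces all row sums to be equal. Hence $c_F(v)=t$ for every facet $F$ and every $v\notin F$; in particular every vertex lies in a facet adjacent to $F$. The remaining task is to show that this rigid structure forces a common core of size $r+1-t$ once $n\ge r+2+\binom{r+1}{t}\binom{r}{t}$. For a fixed facet $F$, each outside vertex $v$ determines the $t$-set $T_v\subseteq F$ of vertices whose removal, together with adding $v$, gives a facet. I would first apply pigeonhole over the at most $\binom{r+1}{t}$ possible sets $T_v$ and use the bound on $n$ to find many outside vertices sharing the same $T$. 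I would then propagate along adjacent facets: a facet $(F\setminus\{s\})\cup\{v\}$ must again satisfy $c=t$, and this constrains the $T$-sets of the other vertices relative to the new facet. The goal is to show that $T_v=T$ for all $v$ and that every facet contains $F\setminus T$, which yields $K\cong\B_n(r,t)$. I expect this propagation and consistency argument to be the main obstacle. The factor $\binom{r}{t}$ in the threshold suggests a second pigeonhole, over the $t$-subsets of the $r$ vertices remaining after one exchange. That second pigeonhole is presumably what rules out configurations in which different outside vertices use different cores.
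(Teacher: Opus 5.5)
Your proof of the inequality is correct and is essentially the paper's argument. You feed the boundary of the $(t+1)$-simplex on $S\cup\{v\}$ directly into the hypothesis on $\lk_K(F\setminus S)$; the paper routes the same cycle through generalized wheels (Lemmas \ref{link} and \ref{le-t}). Your max-row-sum plus irreducibility argument, showing that equality forces $t$-neighbor uniformity, is equivalent to the Perron-vector propagation in Theorem \ref{uni-iff}.

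One small slip concerns the links in $\B_n(r,t)=\Delta_C\star\Delta_W^{t}$. No $(r-t)$-set $\sigma$ contains the core $C$, since $|C|=r+1-t$, so every such link is a cone with apex in $C\setminus\sigma$. The ``simplex skeleta'' alternative you mention never occurs. If it did, it would hurt you: the $t$-skeleton of a simplex on at least $t+2$ vertices has nonzero $\widetilde H_t$.

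The genuine gap is the structural half of the equality case. You leave it as a plan, and as described the plan uses only $t$-neighbor uniformity plus pigeonhole, which cannot suffice. Uniformity alone does not force a common core. For $r=t=1$, take the complete bipartite graph $K_{p,q}$ with $p,q\ge2$ and $p+q=n\ge5$. It is $1$-neighbor uniform and down path connected, has $\q_0=n=tn-(t-1)(r+1)$, and is not a star; here the exchange set $T_v\subseteq F$ really does depend on which side $v$ lies on.

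What rules this out is a \emph{second} use of the link hypothesis with a different $t$-sphere, the suspension $\Delta_2^{1}\star\Delta_{t+1}^{t}$ (a $4$-cycle when $t=1$). The paper does this in Lemma \ref{ex-same}:
\begin{enumerate}
\item Fix $F$. Pigeonhole gives a $t$-set $A$ and a set $U$ of at least $\binom{r}{t}+1$ outside vertices, all with exchange set $A$. Suppose some $w$ has exchange set $A_w\neq A$, and pick $s\in A\setminus A_w$.
\item For $x\in U$, the induced complex on the $r+3$ vertices $F\cup\{w,x\}$ is a co-graph complex: its facets are the complements of the edges of a graph. Uniformity passes to this induced subcomplex, so the non-isolated part of the graph is $(t+1)$-regular (Lemma \ref{regular}).
\item Since $s\in A$ but $s\notin A_w$, the vertex $s$ is adjacent to $w$ and not to $x$. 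Hence its neighbourhood is $\{w\}\cup P_{s,x}$, where $P_{s,x}$ is a $t$-subset of $F\setminus\{s\}$.
\item This is where your guessed second pigeonhole enters: it yields $x\neq y$ in $U$ with $P_{s,x}=P_{s,y}=:P$.
\item Put $S=F\setminus(\{s\}\cup P)$ and $N=\{w\}\cup P$. Then every set $S\cup\{z\}\cup(N\setminus\{a\})$ with $z\in\{x,y\}$ and $a\in N$ is a facet. So $\lk_K(S)$, with $|S|=r-t$, contains the suspension of $\partial\Delta_N$, giving $\widetilde H_t(\lk_K(S),\R)\neq0$, a contradiction.
\end{enumerate}

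Only once exchange sets are known to be independent of the outside vertex does your propagation idea work. Let $F'=(F\setminus\{x\})\cup\{u\}$. Since $x\in A_F$, we get $C_F\subseteq F'$. Any $y\in C_F\setminus C_{F'}$ would then lie in $A_{F'}=A_{F'}(x)$, making $(F\setminus\{y\})\cup\{u\}$ a facet and contradicting $y\notin A_F=A_F(u)$. Down path connectedness then gives a global core $C$ (Lemma \ref{core}). Finally, exchange closure of $\{T: C\cup T\in S_r(K)\}$ shows this family is all of $\binom{V\setminus C}{t}$ (Lemma \ref{ex-complete}). None of these steps is carried out in your proposal, and the key one, the suspension obstruction, is not identified.
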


There is a combinatorial interpretation of the condition on the link homology in Theorem \ref{main}.
By Lemma \ref{link}, it is equivalent to $K$ containing no generalized $(r,t)$-wheels.
So we have the following equivalent statement of Theorem \ref{main}.

\begin{coro}\label{wheel-free-main}
Let $1\le t\le r$, and let $K$ be a pure $r$-dimensional complex on $n$ vertices without generalized $(r,t)$-wheels.
Then
\[	\q_{r-1}(K)\le tn-(t-1)(r+1).	\]
Moreover, if $K$ is $r$-down path connected and
	$n\ge r+2+\binom{r+1}{t}\binom{r}{t}$,
then equality holds if and only if 	$K\cong \B_n(r,t)$.
\end{coro}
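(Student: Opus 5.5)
The statement immediately above is Corollary \ref{wheel-free-main}, and I would derive it from Theorem \ref{main} together with Lemma \ref{link}. The plan is to show that the hypothesis ``$K$ contains no generalized $(r,t)$-wheel'' is equivalent to ``$\widetilde H_t(\lk_K(\sigma),\R)=0$ for every face $\sigma\in K$ with $|\sigma|=r-t$.'' Once this is in hand, the bound $\q_{r-1}(K)\le tn-(t-1)(r+1)$ and the characterization of equality (under $r$-down path connectivity and $n\ge r+2+\binom{r+1}{t}\binom{r}{t}$) transfer verbatim, since they depend only on $K$ and not on how the hypothesis is phrased.

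For the equivalence I would argue in both directions, presumably as the content of Lemma \ref{link}.

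\emph{Homology implies no wheel.} Suppose $K\supseteq \Delta_{r-t}\star H$, where $\Delta_{r-t}$ is spanned by a face $\sigma$ with $|\sigma|=r-t$ and $H$ is a $t$-dimensional basic hole on vertices disjoint from $\sigma$. Then every $t$-face $\tau$ of $H$ satisfies $\tau\cup\sigma\in K$, so $H\subseteq \lk_K(\sigma)$. The link $\lk_K(\sigma)$ has dimension exactly $t$ by purity. A basic hole carries a nonzero $t$-cycle $z$, and since there are no $(t+1)$-faces in the link, $z$ is not a boundary there. Hence $\widetilde H_t(\lk_K(\sigma),\R)\ne 0$.

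\emph{No wheel implies homology.} Conversely, if $\widetilde H_t(\lk_K(\sigma),\R)\neq0$, then $\ker\partial_t\ne0$ in the $t$-dimensional complex $\lk_K(\sigma)$. I would take a nonzero cycle of minimal support; its support is a basic hole $H$ in the sense of Section \ref{sec-2.3}. Then $\sigma\star H\subseteq K$, because each facet $\tau$ of $H$ gives $\sigma\cup\tau\in K$ and $K$ is closed under subsets. This exhibits a generalized $(r,t)$-wheel in $K$.

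The only delicate point is matching ``basic hole'' exactly with ``minimal support of a $t$-cycle'' as it is defined in Section \ref{sec-2.3}. If the definition there differs, I would instead cite Lemma \ref{link} directly for the equivalence.
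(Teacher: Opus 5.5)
Your reduction is circular. In the paper, Theorem~\ref{main} has no independent proof. The final section says that, by Lemma~\ref{link}, Theorem~\ref{main} is equivalent to Corollary~\ref{wheel-free-main}, and then proves only the corollary; the theorem is read off from it. Deriving the corollary from Theorem~\ref{main} plus the wheel/link-homology equivalence therefore proves nothing beyond that equivalence.

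The part you did write out is correct and is the paper's Lemma~\ref{link}. A wheel $\Delta_\sigma\star H$ places $H$ inside the $t$-dimensional link, where there are no $t$-boundaries. Conversely, a support-minimal $t$-cycle of the link generates a basic hole, as in Lemma~\ref{basichole}. A clear symptom of the gap is that neither $r$-down path connectedness nor the hypothesis $n\ge r+2+\binom{r+1}{t}\binom{r}{t}$ is ever used in your argument.

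What you actually need to supply is the spectral and structural argument.
\begin{itemize}
\item \emph{The bound.} Show $|N^\dn(F,u)|\le t$ for every facet $F$ and every $u\notin F$. Otherwise $t+1$ exchangeable vertices $A\subseteq F$ give $\Delta_{F\setminus A}\star\Delta_{t+2}^{t+1}$, with the boundary sphere on $A\cup\{u\}$, which is a generalized wheel. Then evaluate $Q_r^{\down}(K)$ at a facet $F_0$ where the Perron vector attains its maximum $1$; this gives $\q_{r-1}(K)\le (r+1)+t(n-r-1)$.
\item \emph{Equality forces uniformity.} All the inequalities become tight, so the down neighbors of $F_0$ also have Perron value $1$ and $|N^\dn(F_0,u)|=t$. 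Propagating this along down paths makes $K$ $t$-neighbor uniform.
\item \emph{The exchange set is independent of $u$ (Lemma~\ref{ex-same}).} This is the hard rigidity step, and it is where the bound on $n$ enters.
  \begin{itemize}
  \item Pigeonhole gives $\binom{r}{t}+1$ outside vertices $x$ sharing one exchange set $A$. Suppose some $w$ has a different one, and pick $s\in A\setminus A_F(w)$.
  \item The co-graph complex on $F\cup\{w,x\}$ is $t$-neighbor uniform. By Lemma~\ref{regular}, its graph is $(t+1)$-regular off isolated vertices, so $N(s)=\{w\}\cup P_{s,x}$ with $|P_{s,x}|=t$.
  \item A second pigeonhole gives $x\ne y$ with the same $P$. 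This produces the wheel $\Delta_S\star(\Delta_2^{1}\star\Delta_{t+1}^{t})$ with $S=F\setminus(\{s\}\cup P)$, a contradiction.
  \end{itemize}
\item \emph{Global core and completion.} The set $C_F=F\setminus A_F$ propagates along down paths to a global core $C$ (Lemma~\ref{core}). The family $\{T: C\cup T\in S_r(K)\}$ is closed under single exchanges, so it equals $\binom{V\setminus C}{t}$ (Lemma~\ref{ex-complete}).
\item \emph{The converse.} This needs Proposition~\ref{book}. The book is $t$-neighbor uniform and $r$-down path connected. It is wheel-free because any $H$ with $\Delta_S\star H$ inside it is a cone over a vertex of $C\setminus S$.
\end{itemize}
None of this can be obtained by rephrasing the hypothesis.
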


Corollary \ref{wheel-free-main} interpolates between two previously studied extremal signless Laplacian problems for simplicial complexes.
Indeed, when $t=r$, the core $\Delta_{r-t}$ is the simplex on the empty vertex set, and then the generalized  $(r,r)$-wheels are precisely $r$-dimensional basic holes.
Thus, the case $t=r$ recovers the $r$-hole-free setting studied by Fan, She, and Zhang \cite{FSZ2025}.
In this case, the extremal complex becomes $\B_n(r,r) = \Delta_1\star \Delta_{n-1}^{r}$.
At the other extreme, when $t=1$, a $1$-dimensional basic hole is precisely a cycle, and hence the generalized $(r,1)$-wheels are the usual $r$-dimensional wheels introduced by S\'os, Erd\H{o}s and Brown \cite{SEB1973}.
Hence, the case $t=1$ corresponds to the wheel-free setting studied by Zhang and Fan \cite{ZF2026}.
In this case, the extremal complex becomes $\B_n(r,1)=\Delta_r\star \Delta_{n-r}^{1}$.

Therefore, the present theorem does not merely reprove the hole-free and wheel-free results separately.
Rather, it places them in a single fixed-codimension link-homology framework.
For each $1\le t\le r$, we forbid top-dimensional homology in the links of faces of cardinality $r-t$, and this local homological condition is converted into a uniform local exchange bound.
This exchange bound leads to the sharp estimate
\begin{equation}\label{upspecbound}
\q_{r-1}(K)\le tn-(t-1)(r+1).
\end{equation}
The equality case is then governed by a rigidity phenomenon: $t$-neighbor uniformity; see Section \ref{sec-2.1}.

The main difficulty is showing that this local exchange uniformity has a global consequence.
Under the $r$-down path connectedness, we prove that for sufficiently large $n$,  $K$ has a global common core of size $r+1-t$, and $K$ is therefore isomorphic to $\B_n(r,t)=\Delta_{r+1-t}\star \Delta_{n-r-1+t}^{t}$.
The technique used here involves co-graph complexes and local exchange operations.

By Corollary \ref{wheel-free-main}, we provide an upper bound for the facet number of a pure $r$-dimensional complex without generalized $(r,t)$-wheels, where $f_i(K)$ denotes the number of $i$-faces in a complex $K$.

\begin{coro}\label{face-bound}
Let $1\le t\le r$, and let $K$ be a pure $r$-dimensional complex on $n$ vertices without generalized $(r,t)$-wheels.
Then
	\[ 	f_r(K)	\le	\frac{tn-(t-1)(r+1)}{(r+1)^2}\, f_{r-1}(K).	\]
In particular,
	\[	f_r(K)	\le	\frac{tn-(t-1)(r+1)}{(r+1)^2}\binom{n}{r}.	\]
\end{coro}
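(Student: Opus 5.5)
The plan is to deduce this directly from Corollary \ref{wheel-free-main} via a Rayleigh quotient comparison. Take the standard setup for the $(r-1)$-up signless Laplacian: it is $Q=\p_r|\p_r|^{\top}$-type, i.e. $Q_{r-1}^{\up}(K)=B B^{\top}$, where $B$ is the unsigned incidence matrix between $(r-1)$-faces and $r$-faces. With this definition, $Q$ is a nonnegative symmetric matrix indexed by the $(r-1)$-faces of $K$. Its diagonal entry at $\tau$ equals the up-degree $\deg(\tau)$, the number of $r$-faces containing $\tau$. Its off-diagonal entry at $(\tau,\tau')$ is $1$ exactly when $\tau\cup\tau'$ is an $r$-face of $K$, and $0$ otherwise.

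Test the all-ones vector $\mathbf 1\in\R^{f_{r-1}(K)}$ in the Rayleigh quotient:
\[
\q_{r-1}(K)\ \ge\ \frac{\mathbf 1^{\top}Q\mathbf 1}{\mathbf 1^{\top}\mathbf 1}=\frac{\|B^{\top}\mathbf 1\|^2}{f_{r-1}(K)}.
\]
Each $r$-face $\sigma$ contains exactly $r+1$ faces of dimension $r-1$, so the $\sigma$-coordinate of $B^{\top}\mathbf 1$ equals $r+1$. Hence $\|B^{\top}\mathbf 1\|^2=(r+1)^2f_r(K)$. Combining this with the upper bound $\q_{r-1}(K)\le tn-(t-1)(r+1)$ from Corollary \ref{wheel-free-main} gives
\[
(r+1)^2 f_r(K)\le \bigl(tn-(t-1)(r+1)\bigr)f_{r-1}(K),
\]
which is the first inequality.

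For the second inequality, use the trivial bound $f_{r-1}(K)\le\binom{n}{r}$. This step requires the coefficient $tn-(t-1)(r+1)$ to be nonnegative. Since $t\le r$ and $K$ has an $r$-face, we have $n\ge r+1$, so
\[
tn-(t-1)(r+1)\ge t(r+1)-(t-1)(r+1)=r+1>0.
\]

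There is no real obstacle here. The only point to check is the convention that the paper's $\q_{r-1}$ is indeed the largest eigenvalue of $BB^{\top}$, whose quadratic form is $\sum_{\sigma}(\sum_{\tau\subset\sigma}x_\tau)^2$; the Rayleigh quotient step uses exactly this. If the paper instead works with $B^{\top}B$ on $r$-faces, the nonzero spectrum is the same, and testing the vector $\mathbf 1$ on $r$-faces gives the same bound.
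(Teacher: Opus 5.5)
Your proposal is correct and is the same argument the paper uses: it combines the upper bound $\q_{r-1}(K)\le tn-(t-1)(r+1)$ from Corollary \ref{wheel-free-main} with the all-ones Rayleigh quotient estimate \eqref{raylei-face}, $\q_{r-1}(K)\ge (r+1)^2 f_r(K)/f_{r-1}(K)$. Your check that $tn-(t-1)(r+1)\ge r+1>0$, which is needed to pass to $f_{r-1}(K)\le\binom{n}{r}$, is a detail the paper leaves implicit.
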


\begin{proof}
This follows from Corollary \ref{wheel-free-main} and the Rayleigh quotient estimate \eqref{raylei-face}.
\end{proof}

The $t$-neighbor uniformity was used to characterize the equality case in \eqref{upspecbound}, where $1 \le t \le r$.
In the following, we use $(r+1)$-neighbor uniformity to characterize the equality case of the spectral upper bound for the Cohen-Macaulay pure $r$-dimensional complexes.

\begin{thm}\label{cmbound}
Let $K$ be a Cohen-Macaulay pure $r$-dimensional complex over a field $\mathbb F$ on $n$ vertices. Then
	\[	\q_{r-1}(K)\le (r+1)(n-r).	\]
Moreover, equality holds if and only if $K=\Delta_n^{r+1}$.
\end{thm}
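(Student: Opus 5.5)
We need a proof plan for Theorem \ref{cmbound}: for a Cohen-Macaulay pure $r$-complex $K$, $\q_{r-1}(K)\le (r+1)(n-r)$, with equality iff $K=\Delta_n^{r+1}$.

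The upper bound does not use the Cohen-Macaulay hypothesis. Write the $(r-1)$-up signless Laplacian as $Q=|\partial_r||\partial_r|^{T}$, acting on oriented $(r-1)$-faces. Its row for an $(r-1)$-face $\tau$ has:
\begin{itemize}
\item diagonal entry $\deg(\tau)$, the number of $r$-faces containing $\tau$, which is at most $n-r$;
\item off-diagonal entries of absolute value $1$, one for each $(r-1)$-face $\tau'$ with $\tau\cup\tau'\in K$.
\end{itemize}
Each $r$-face $\tau\cup\{v\}$ contributes $r$ such neighbours. Hence the absolute row sum is $(r+1)\deg(\tau)\le (r+1)(n-r)$. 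The spectral radius is bounded by the maximum absolute row sum, which gives the bound.

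For equality I would use the variational characterization. Let $x$ be a unit eigenvector for $\q_{r-1}(K)$ and set $y=|x|$. Then
\[
\q_{r-1}(K)=\sum_{\sigma\in K_r}\Big(\sum_{\tau\subset\sigma}\pm x_\tau\Big)^2\le \sum_{\sigma}\Big(\sum_{\tau\subset\sigma}y_\tau\Big)^2\le (r+1)\sum_\tau \deg(\tau)\,y_\tau^2\le (r+1)(n-r).
\]
Equality forces two things:
\begin{itemize}
\item $\deg(\tau)=n-r$ for every $\tau$ in $\supp(y)$;
\item by Cauchy--Schwarz, $y$ is constant on the boundary of each facet containing a support face.
\end{itemize}
Propagating the second condition along shared $(r-1)$-faces, the support is a union of $r$-down path components on which $y$ is constant. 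Every $(r-1)$-face in such a component has full degree $n-r$. This is the ``$(r+1)$-neighbor uniformity''.

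The remaining step is to show this forces $K=\Delta_n^{r+1}$; this is where Cohen-Macaulayness enters.

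\emph{Connectivity.} A Cohen-Macaulay pure complex is strongly connected, i.e.\ $r$-down path connected. This holds because links of faces of codimension $\ge 2$ have vanishing $\widetilde H_0$, so they are connected, and the standard induction then gives strong connectivity. So the support is all of $K_{r-1}$, and every $(r-1)$-face of $K$ has degree $n-r$.

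\emph{Saturation.} Take any $(r-1)$-face $\tau$. Every $v\notin\tau$ gives $\tau\cup\{v\}\in K$. Each $(r-1)$-subface of $\tau\cup\{v\}$ is then also saturated. Starting from one $(r-1)$-face $\tau_0$, we show by exchange that every $r$-subset of $V$ is a face:
\begin{itemize}
\item the $r$-sets at symmetric-difference distance $1$ from $\tau_0$ lie in $K$, since they are faces of $\tau_0\cup\{v\}$;
\item iterating, every $r$-subset lies in $K$;
\item then every $(r+1)$-subset lies in $K$ by saturation.
\end{itemize}
So $K=\Delta_n^{r+1}$, and conversely $\Delta_n^{r+1}$ is Cohen-Macaulay with every row sum equal to $(r+1)(n-r)$ and the all-ones eigenvector.

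The main obstacle is verifying the strong connectivity of Cohen-Macaulay complexes cleanly from Reisner's criterion, including the case where some vertex is not used. If the support argument only gives a component, the saturation step still works: a component in which every face has full degree already absorbs all $r$-sets, so strong connectivity may not even be needed. I would check this carefully.
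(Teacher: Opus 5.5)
Your argument is correct, and, as you suspected at the end, neither the Cohen--Macaulay hypothesis nor the strong-connectivity step is needed. The paper makes this point itself: it proves the unconditional Proposition~\ref{absolute-bound} for every pure $r$-dimensional complex, and deduces Theorem~\ref{cmbound} because $\Delta_n^{r+1}$ is Cohen--Macaulay.

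The two proofs share a skeleton: a row-sum bound, saturation forced by equality, and single-vertex exchanges until every $(r+1)$-set is a facet. They differ in execution.

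\begin{itemize}
\item \emph{The paper} works with $r$-faces and $Q_r^{\down}(K)$. It bounds $|N^{\dn}(F,u)|\le r+1$. For equality it normalizes a nonnegative Perron vector to have maximum $1$ at some facet. The eigenvalue equation at such a facet forces all $r+1$ exchanges to be present and all down neighbors to again have coordinate $1$, so the exchange walk stays among maximal facets.
\item \emph{You} work with $(r-1)$-faces and $Q_{r-1}^{\up}(K)$, where the row sum is $(r+1)\deg(\tau)$. For equality you use the Rayleigh quotient and Cauchy--Schwarz. This shows that every face in the support of the eigenvector has full degree $n-r$, and that the support is closed under passing to the other boundary faces of a facet.
\end{itemize}

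Your route needs no Perron--Frobenius normalization and works for any eigenvector of the top eigenvalue. It gives saturation on the whole support at once, after which your exchange walk on $r$-subsets goes through without any connectivity. The paper's formulation has the advantage of fitting its $t$-neighbor-uniformity framework as the case $t=r+1$.

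Two minor points. The operator is signless, so the signs $\pm$ and the ``absolute values'' in your write-up are unnecessary. Also, the support is a union of $(r-1)$-up path components, not ``$r$-down path components'', though the two notions correspond.
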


For comparison, we also provide the following absolute upper bound, which holds for all pure $r$-dimensional complexes without any homological assumptions.

\begin{prop}\label{absolute-bound}
Let $K$ be a pure $r$-dimensional complex on $n$ vertices.
Then
	\[ 	\q_{r-1}(K)\le (r+1)(n-r).	\]
Moreover, equality holds if and only if $K=\Delta_n^{r+1}$.
\end{prop}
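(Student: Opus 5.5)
We need to prove Proposition \ref{absolute-bound}: $\q_{r-1}(K)\le (r+1)(n-r)$, with equality iff $K=\Delta_n^{r+1}$.

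\textbf{Setup.} The $(r-1)$-up signless Laplacian is $Q=B B^{\top}$, where $B$ is the unsigned incidence matrix between $(r-1)$-faces and $r$-faces. Its diagonal entry at an $(r-1)$-face $\tau$ is $\deg(\tau)$, the number of $r$-faces containing $\tau$. For distinct $\tau,\tau'$, the off-diagonal entry is $1$ exactly when $\tau\cup\tau'$ is an $r$-face of $K$, and $0$ otherwise.

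\textbf{Step 1: a row-sum bound.} Since $Q$ is nonnegative, its spectral radius is at most its maximum row sum. The row sum at $\tau$ is
\[
\deg(\tau)+\sum_{\sigma\supset\tau}(|\sigma|-1)=(r+1)\deg(\tau),
\]
because each $r$-face $\sigma\supset\tau$ has $r+1$ facets of size $r$, namely $\tau$ and $r$ others. Moreover $\deg(\tau)\le n-r$, since $\sigma=\tau\cup\{v\}$ with $v\notin\tau$. Hence $\q_{r-1}(K)\le (r+1)(n-r)$.

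\textbf{Step 2: equality when $K=\Delta_n^{r+1}$.} In $\Delta_n^{r+1}$ every row sum equals $(r+1)(n-r)$. So the all-ones vector is an eigenvector with this eigenvalue, and equality holds.

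\textbf{Step 3: the converse, which is the main point.} Suppose $\q_{r-1}(K)=(r+1)(n-r)$ and let $x\ge 0$ be a Perron eigenvector. Take a component $C$ of $Q$, meaning a block of the $r$-down connectivity, on which $x$ is supported. The restricted matrix $Q_C$ is irreducible. By Perron--Frobenius, its spectral radius equals its maximum row sum only if all its row sums are equal. Thus every $\tau\in C$ has $\deg(\tau)=n-r$.

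Now I propagate fullness. Fix $\tau\in C$. Then $\tau\cup\{v\}\in K$ for every vertex $v\notin\tau$. Each $(r-1)$-subface of such an $r$-face is down-adjacent to $\tau$, so it also lies in $C$ and also has full degree. Any two $r$-subsets $\tau,\tau'$ of $[n]$ are joined by a sequence of single-vertex exchanges, in which consecutive members share $r-1$ vertices. Each exchange step $\tau_i\to\tau_{i+1}$ is realized inside the $r$-face $\tau_i\cup\tau_{i+1}$, which exists because $\tau_i$ is full. By induction, every $r$-subset of $[n]$ is a full face of $K$, and every $(r+1)$-subset is an $r$-face of $K$. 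Therefore $K=\Delta_n^{r+1}$.

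Here I am assuming that all $n$ vertices are vertices of $K$. Note also that the step $\deg(\tau)=n-r$ counts all $n$ vertices. If the Perron vector were supported on a smaller component, the full-degree conclusion would still force that component to see all $n$ vertices, so no case split is needed.

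The only delicate point is the Perron--Frobenius equality argument on a possibly reducible $Q$. Restricting to an irreducible block that attains the spectral radius handles it.
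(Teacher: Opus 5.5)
Your proof is correct and is essentially the paper's argument in dual form. The paper bounds the row sums of $Q_r^{\down}(K)$ via $|N^{\dn}(F,u)|\le r+1$ and handles equality by propagating the maximal Perron entry; you instead bound the row sums $(r+1)\deg(\tau)$ of $Q_{r-1}^{\up}(K)$ via $\deg(\tau)\le n-r$ and use the equal-row-sum case of Perron--Frobenius on an irreducible block. Both proofs then propagate fullness by single-vertex exchanges, since $|N^{\dn}(F,u)|=r+1$ for all $u\notin F$ says exactly that every $(r-1)$-subface of $F$ has full degree $n-r$.

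One wording fix: the $(r-1)$-subfaces of $\tau\cup\{v\}$ lie in $C$ because they are up neighbors of $\tau$ (they share the $r$-face $\tau\cup\{v\}$). Up-adjacency is what defines the irreducible blocks of $Q_{r-1}^{\up}(K)$; down-adjacency alone would not put them in the same block.
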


\begin{rmk}
Since Proposition \ref{absolute-bound} holds for all pure $r$-dimensional complexes, it applies in particular to Cohen-Macaulay pure complexes over any field.
In this class, the equality case is still $\Delta_n^{(r)}$, which is Cohen-Macaulay over every field.
\end{rmk}

The paper is organized as follows.
In Section 2, we collect the necessary notation on homology, basic holes, and the signless Laplacian.
In Section 3, we give a combinatorial interpretation of the condition on the link homology, that is, $\widetilde H_t(\lk_K(\sigma), \R)=0$ for every $(r-t-1)$-face $\sigma \in K$ if and only if $K$ contains no generalized $(r,t)$-wheels, and derive a spectral upper bound for an $r$-dimensional complex $K$ without generalized $(r,t)$-wheels.
If $K$ is $r$-down path connected, then the equality in the spectral upper bound holds if and only if $K$ is $t$-neighbor uniform.
In addition, Proposition \ref{absolute-bound} is proved here, which implies Theorem \ref{cmbound}.
In Section 4, under $r$-down path connectedness, we prove that for sufficiently large $n$, the extremal complex $K$ attaining the spectral upper bound contains a global common core of size $r+1-t$ and hence is isomorphic to $\B_n(r,t)$.
We finally prove Corollary \ref{wheel-free-main}, an equivalent statement of Theorem \ref{main}.

\section{Preliminaries}
\subsection{Simplicial complexes and homology}\label{sec-2.1}
Throughout the paper, all simplicial complexes are finite.
Let $K$ be a simplicial complex with vertex set $V(K)$.
For $X\subseteq V(K)$, the \emph{induced subcomplex} $K[X]$ is the complex whose simplices are all faces of $K$ contained in $X$.
The \emph{dimension} of an $i$-face is $i$, and the \emph{dimension} of $K$ is the maximum dimension of all faces of $K$.
We allow $\emptyset \in K$, whose dimension is $-1$.
Let $S_i(K)$ denote the set of all $i$-faces of $K$, where  $S_{-1}(K)=\{\emptyset\}$.
The \emph{$p$-skeleton} of $K$, denoted by $K^{(p)}$, is the subcomplex consisting of all faces of dimension at most $p$.
For a finite vertex set $X$, let $\Delta_X$ denote the full simplex on $X$.
If $|X|=n$, we also write $\Delta_n$ for a simplex on $n$ vertices.

For two $i$-faces of $K$ that are faces of an $(i+1)$-face, we call them \emph{up neighbors}.
An \emph{$i$-up path} is a sequence of $i$-faces $F_1,F_2,\ldots,F_m$ such that $F_j$ and $F_{j+1}$ are up neighbors for all $j$.
The complex $K$ is called \emph{$i$-up path connected} if any two $i$-faces are connected by an $i$-up path.
For an $i$-face $F$, let $N^{\un}(F)$ be the set of up neighbors of $F$.
The \emph{degree} of $F$, denoted by $\deg(F)$, is the cardinality of $N^{\un}(F)$.

Similarly, for two $i$-faces of $K$ that share an $(i-1)$-face, we refer to them as \emph{down neighbors}.
An \emph{$i$-down path} is a sequence of $i$-faces $F_1,F_2,\ldots,F_m$ such that $F_j$ and $F_{j+1}$ are down neighbors for all $j$.
The complex $K$ is called \emph{$i$-down path connected} if any two $i$-faces are connected by an $i$-down path.
For an $i$-face $F$ and a vertex $u\in V(K)\setminus F$, let $N^\dn(F)$ be the set of down neighbors of $F$, and let
$$ N^\dn(F,u):=\{F'\in N^\dn(F):u\in F'\}.$$
For $1 \le t \le r+1$, a pure $r$-dimensional complex $K$ is called \emph{$t$-neighbor uniform} if
$$|N^\dn(F,u)|=t$$
for every $F\in S_r(K)$ and every $u\in V(K)\setminus F$.

A face $F$ of $K$ is \emph{oriented} if we choose an ordering of its vertices and write $[F]$.
Two orderings of the vertices are said to determine the same orientation if there is an even permutation transforming one ordering into the other.
If the permutation is odd, then the orientations are opposite.

After choosing one orientation for each $i$-face, the \emph{$i$-th chain group} $C_i(K,\R)$ of $K$ with coefficients in $\R$ is the vector space over $\R$ with basis
$\{[F]:F\in S_i(K)\}$.
In particular, set $C_{-1}(K,\R)=\R$.
The \emph{$i$-th boundary map} $\partial_i:C_i(K,\R)\to C_{i-1}(K,\R)$ is defined by
\[
\partial_i[v_0,\ldots,v_i] = \sum_{j=0}^i(-1)^j[v_0,\ldots,\widehat v_j,\ldots,v_i],
\]
where $\widehat v_j$ indicates that the vertex $v_j$ is omitted.
These boundary maps satisfy $\partial_i\partial_{i+1}=0$, and hence they form the chain complex
\[
\cdots \xrightarrow{\partial_{i+1}} C_i(K,\R) \xrightarrow{\partial_i} C_{i-1}(K,\R) \to\cdots \xrightarrow{\partial_1} C_0(K,\R) \xrightarrow{\partial_0} C_{-1}(K,\R) \to 0.
\]

The kernel of $\partial_i$, denoted by $Z_i(K,\R):=\ker\partial_i$, is called the \emph{space of $i$-cycles}.
The image of $\partial_{i+1}$, denoted by $B_i(K,\R):=\im\partial_{i+1}$, is called the \emph{space of $i$-boundaries}.
Since $\partial_i\partial_{i+1}=0$, we have $B_i(K,\R)\subseteq Z_i(K,\R)$.
The quotient
\[
\widetilde H_i(K,\R)=Z_i(K,\R)/B_i(K,\R)
\]
is called the \emph{$i$-th reduced homology group} of $K$ over $\R$.
Its dimension is the \emph{$i$-th Betti number} of $K$, denoted by $\beta_i(K,\R)$.
When $i\ge 1$, the $i$-th reduced homology agrees with ordinary $i$-th homology.

\subsection{Basic holes}\label{sec-2.3}
We first identify the minimal top-dimensional homological obstructions arising for links.
Let $H$ be a pure $t$-dimensional complex.
For a $t$-chain $z=\sum_{F\in S_t(H)}a_F [F]\in C_t(H,\R)$, define
$\supp(z)=\{F\in S_t(H):a_F\ne0\}$.
A nonzero $t$-cycle $z\in Z_t(H,\R)$ is called \emph{support-minimal} if there is no nonzero
$t$-cycle $z'\in Z_t(H,\R)$ such that $\supp(z')\subsetneq\supp(z)$.

A pure $t$-dimensional complex $H$ is called a \emph{$t$-dimensional basic hole}
over $\R$ if there exists a support-minimal nonzero $t$-cycle
$z\in Z_t(H,\R)$ such that $S_t(H)=\supp(z)$.
Throughout the paper, all basic holes are understood to be over $\R$.

This support formulation is equivalent to the usual homological description:
$\beta_t(H,\R)=1$ and $\beta_t(H-F,\R)=0$ for every $F\in S_t(H)$, where
$H-F$ denotes the complex obtained from $H$ by deleting the $t$-face $F$.
Indeed, if $S_t(H)=\supp(z)$ for a support-minimal nonzero $t$-cycle $z$, then
by the minimality of support, it is easy to show that $Z_t(H,\R)$ is one-dimensional.
Since $H$ is $t$-dimensional, $B_t(H,\R)=0$, and hence $\beta_t(H,\R)=1$.
The same minimality also gives $\beta_t(H-F,\R)=0$ for every $F\in S_t(H)$.

\begin{lemma}\label{basichole}
Let $L$ be a pure $t$-dimensional complex, where $t\ge1$.
If $\widetilde H_t(L,\R)\ne0$, then $L$ contains a $t$-dimensional basic hole as a subcomplex.
\end{lemma}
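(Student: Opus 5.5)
Since $L$ is $t$-dimensional, there are no $(t+1)$-faces, so $B_t(L,\R)=0$ and $\widetilde H_t(L,\R)=Z_t(L,\R)$. The hypothesis therefore gives a nonzero $t$-cycle $z\in Z_t(L,\R)$. I will pick, among all nonzero $t$-cycles of $L$, one whose support has minimum cardinality. Such a $z_0$ exists because $S_t(L)$ is finite and the set of nonzero cycles is nonempty. By construction $z_0$ is support-minimal: any nonzero cycle $z'$ with $\supp(z')\subsetneq\supp(z_0)$ would have strictly smaller support size.

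Next, let $H$ be the subcomplex of $L$ generated by the $t$-faces in $\supp(z_0)$, namely all subsets of faces in $\supp(z_0)$. This set is closed under inclusion and lies in $L$, so $H$ is a subcomplex. It is pure $t$-dimensional because every face of $H$ lies in some $t$-face of $\supp(z_0)$, and $\supp(z_0)\neq\emptyset$. By construction $S_t(H)=\supp(z_0)$.

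It remains to check that $z_0$ is a support-minimal nonzero $t$-cycle of $H$, not just of $L$.
\begin{itemize}
\item Every $(t-1)$-face appearing in $\partial_t z_0$ lies in $H$, and the boundary map of $H$ is the restriction of that of $L$. Hence $z_0\in C_t(H,\R)$ and $\partial_t z_0=0$, so $z_0\in Z_t(H,\R)$.
\item Since $C_t(H,\R)\subseteq C_t(L,\R)$ compatibly with the boundary maps, $Z_t(H,\R)\subseteq Z_t(L,\R)$.
\item A cycle $z'$ of $H$ with $\supp(z')\subsetneq\supp(z_0)$ would therefore also be a cycle of $L$, contradicting the minimality of $z_0$ in $L$.
\end{itemize}
Hence $H$ is a $t$-dimensional basic hole contained in $L$.

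There is no genuine obstacle here. The only points needing care are that $B_t(L,\R)=0$ because $L$ has no $(t+1)$-faces, and that support-minimality transfers from $L$ to the subcomplex $H$.
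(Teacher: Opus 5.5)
Your proof is correct and matches the paper's argument: use $B_t(L,\R)=0$ to get a nonzero $t$-cycle, take a support-minimal one, let $H$ be the subcomplex generated by its support, and note that support-minimality transfers to $H$ because $Z_t(H,\R)\subseteq Z_t(L,\R)$. Choosing a cycle of minimum support cardinality simply makes explicit the existence of a support-minimal cycle, which the paper takes for granted.
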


\begin{proof}
Since $\dim L = t$, we have $B_t(L,\R)=0$.
Hence, $\widetilde H_t(L,\R)\ne0$ implies that $Z_t(L,\R)$ contains a nonzero $t$-cycle.
Choose a support-minimal nonzero $t$-cycle $z\in Z_t(L,\R)$.
Let $H$ be the pure $t$-dimensional subcomplex of $L$ generated by the $t$-faces in $\supp(z)$ and all their subfaces.
Then $S_t(H)=\supp(z)$.
Moreover, $z$ remains support-minimal when regarded as a $t$-cycle of $H$ because every
$t$-cycle of $H$ is also a $t$-cycle of $L$.
Therefore, $H$ is a $t$-dimensional basic hole.
\end{proof}

We shall use two standard examples of basic holes.
First, $\Delta_{r+2}^{r+1}$ is the boundary complex of a
$(r+1)$-simplex.
Hence $\Delta_{r+2}^{r+1} \cong S^r$.
Second, let $\diamondsuit_{r+3}^{r+1}$ be the $r$-dimensional complex on vertex set $[r+3]$ with facets
$$
\left\{\{i\}\cup F:i\in\{r+2,r+3\},\ F\in\binom{[r+1]}r\right\}.
$$
Equivalently,
$\diamondsuit_{r+3}^{r+1}\cong \Delta_2^{1}\star \Delta_{r+1}^{r} \cong S^r$.
In both examples, the unique top-dimensional homology class is supported
on all facets; deleting any facet therefore kills the top-dimensional homology.
Hence, these complexes are basic holes.

Lemma \ref{basichole} shows that basic holes are the minimal top-dimensional homological obstructions.
In Section \ref{up-bound}, we use this observation to translate the condition on link homology in Theorem \ref{main} into the exclusion of generalized wheels.

\subsection{Signless Laplacian}\label{signless}
It is known that  the Laplacian of a simplicial complex is closely related to the discrete version of the Hodge theorem; see \cite{E1944,DR2002,HJ2013a}.
Here, we use the signless Laplacian in the spectral extremal problem of simplicial complexes.
The signless Laplacian of a simplicial complex was introduced in \cite{KO2020}, and has been systematically studied with many interesting applications \cite{KL2014,Lub2014}.
It was further extended to the signless $1$-Laplacian for investigating the combinatorial properties of a complex \cite{LZ2020}.
One can also refer to \cite{FWW2025,SWF2025} for the relationship between the spectral radius of Laplacians and signless Laplacians.

The signless Laplacian operator is obtained by omitting the signs in the usual boundary maps.
Let $D_i(K,\mathbb R)$ be the real vector space generated by the unoriented $i$-faces of $K$.
The \emph{$i$-th signless boundary map}
$|\p_i|: D_i(K,\R) \to D_{i-1}(K,\R)$ is defined by
$$ |\p_i|\{v_0,\ldots,v_i\}=\sum_{j=0}^i \{v_0,\ldots,\hat{v}_j, \ldots, v_i\}.$$
Let $D^i(K,\R)$ be the dual space of $D_i(K,\R)$, with dual basis
$\{F^*:F\in S_i(K)\}$.
The \emph{$i$-th signless coboundary map} $|\d_i|: D^i(K,\R) \to D^{i+1}(K,\R)$ is defined by $ |\d_i| f =f |\p_{i+1}|$ for each $f \in D^i(K,\R)$.
For each $i$, endow $D^i(K,\R)$ with a positive inner product that makes
the dual basis $\{F^*: F \in S_i(K)\}$ orthonormal.
Let $|\d_i|^*:D^{i+1}(K,\R) \to D^i(K,\R)$ be the adjoint of $|\d_i|$,
which satisfies
$$\langle |\d_i| f, g \rangle_{D^{i+1}(K,\R)}=\langle f, |\d_i|^* g \rangle_{D^{i}(K,\R)}$$
for all $f \in D^i(K,\R)$ and $g \in D^{i+1}(K,\R)$.
The \emph{$i$-th up signless Laplace operator} and \emph{$i$-th down signless Laplace operator} of $K$ are respectively defined by
\begin{equation}\label{SLaplace}
	Q_i^{\up}(K)=|\d_{i}^*| |\d_{i}|, \quad Q_i^{\down}(K)=|\d_{i-1}| |\d_{i-1}^*|.
\end{equation}

Equivalently, after choosing the natural basis of faces, these operators
are represented by unsigned incidence matrices.
Let $B_i$ be the matrix of $|\d_i|$.
Then $B_i$ is the unsigned incidence matrix between $i$-faces and $(i+1)$-faces, whose rows are indexed by $(i+1)$-faces and whose columns are
indexed by $i$-faces, with
\[
(B_i)_{\bar{F},F}
=\begin{cases}
	1, & \text{if } F\subset \bar{F},\\
	0, & \text{otherwise}.
\end{cases}
\]
Consequently, we have the following matrix forms:
\[
Q_i^{\up}(K)=B_i^{\top}B_i,
\quad
Q_{i}^{\down}(K)=B_{i-1}B_{i-1}^{\top}.
\]

For each $f \in D^i(K,\R)$, by definition, we have the following expressions (see \cite{FSZ2025, ZF2026}).
\begin{equation}\label{Q-up}
	(Q_i^{\up}(K)f)(F)=\deg (F)  f(F) + \sum_{F' \in N^\un(F)} f(F');
\end{equation}
\begin{equation}\label{Q-dn}
	(Q_i^{\down}(K)f)(F)=|F| f(F) + \sum_{F' \in N^\dn(F)} f(F').
\end{equation}

For a pure $r$-dimensional complex $K$, let $\q_{r-1}(K)$ denote the spectral radius of $Q_{r-1}^{\up}(K)$.
When $r=1$, the complex $K$ is a graph, and $Q_0^{\mathrm{up}}(K)$ is exactly the
signless Laplacian matrix of a graph.
Since $Q_{r-1}^\up$ and $Q_{r}^\down$ share the same nonzero eigenvalues, especially the spectral radius, we use $Q_{r-1}^\up$ or $Q_{r}^\down$ interchangeably in the subsequent discussion as convenient.

This operator provides a natural spectral replacement for the number of facets.
Let $f_r(K)$ be the size of the set $S_r(K)$.
Indeed, if $\mathbf 1$ denotes the all-one vector defined on $S_{r-1}(K)$,
then the Rayleigh quotient gives
\begin{equation}\label{raylei-face}
	\q_{r-1}(K)
	\geq
	\frac{\langle Q_{r-1}^{\up}(K)\mathbf 1,\mathbf 1\rangle}
	{\langle \mathbf 1,\mathbf 1\rangle}
	=
	\frac{(r+1)^2 f_r(K)}{f_{r-1}(K)}.
\end{equation}
In particular, since $f_{r-1}(K)\leq \binom{n}{r}$, an upper bound on
$\q_{r-1}(K)$ yields an upper bound on the number of facets $f_r(K)$.
Thus, the signless Laplacian spectral radius is directly connected with the
ordinary high-dimensional Tur\'an problem.

Although the homological condition in Theorem \ref{main} is defined using the ordinary signed boundary complex, the spectral operator in this paper is the signless Laplacian.
The bridge between these two objects is combinatorial.
The vanishing of the top homology of suitable links is first converted,
via Lemma \ref{link}, into the exclusion of generalized wheels.
This forbidden local configuration then gives the exchange bound in Lemma \ref{le-t}, which is precisely the input needed to control the row sums of $Q_r^{\down}(K)$.

\section{From link homology to the spectral upper bound}\label{up-bound}

\subsection{Link homology and generalized wheels}
We convert the condition  on link homology to the exclusion of generalized wheels and the bound of local down neighbor numbers.

\begin{lemma}\label{link}
Let $K$ be a pure $r$-dimensional complex, and let $1\le t\le r$.
Then $K$ contains no generalized $(r,t)$-wheels if and only if
	\[
	\widetilde H_t(\lk_K(\sigma), \R)=0
	\]
for every face $\sigma\in K$ with $|\sigma|=r-t$.
\end{lemma}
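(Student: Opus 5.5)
We prove both directions through the join description of generalized wheels, $\Delta_{r-t}\star H$, where $\Delta_{r-t}$ is the full simplex on a vertex set $\sigma$ with $|\sigma|=r-t$. The basic observation we use throughout is the following. For a face $\sigma$ and a complex $H$ on vertices disjoint from $\sigma$, the complex $\Delta_\sigma\star H$ is a subcomplex of $K$ if and only if $H\subseteq \lk_K(\sigma)$. Indeed, $\tau\cup\sigma\in K$ for every $\tau\in H$ is exactly the condition that each face $\tau\cup\rho$ with $\rho\subseteq\sigma$ lies in $K$, by closure under inclusion. Membership $\tau\in\lk_K(\sigma)$ also requires $\tau\cap\sigma=\emptyset$, which holds because the vertex sets are disjoint.

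For ($\Leftarrow$), suppose $K$ contains a generalized $(r,t)$-wheel $\Delta_\sigma\star H$ with $|\sigma|=r-t$ and $H$ a $t$-dimensional basic hole. By the observation, $H\subseteq L:=\lk_K(\sigma)$, and $\sigma\in K$. Let $z$ be the support-minimal $t$-cycle of $H$ with $\supp(z)=S_t(H)$. Then $z$ is a nonzero element of $Z_t(L,\R)$. Since $K$ is pure $r$-dimensional, $L$ has dimension exactly $t$, so $B_t(L,\R)=0$. Hence $\widetilde H_t(L,\R)\ne 0$, which contradicts the hypothesis.

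For ($\Rightarrow$), suppose some face $\sigma$ with $|\sigma|=r-t$ has $\widetilde H_t(\lk_K(\sigma),\R)\neq 0$. Since $K$ is pure $r$-dimensional, $L=\lk_K(\sigma)$ is pure $t$-dimensional. To see this, let $\tau\in L$; then $\tau\cup\sigma$ lies in some facet $G$ of dimension $r$, and $G\setminus\sigma$ is a $t$-face of $L$ containing $\tau$. Lemma \ref{basichole} then gives a $t$-dimensional basic hole $H\subseteq L$. Its vertices are disjoint from $\sigma$, so by the observation $\Delta_\sigma\star H\subseteq K$. This is a generalized $(r,t)$-wheel, a contradiction.

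We expect the purity check on the link to be the only delicate point, since Lemma \ref{basichole} requires a pure $t$-dimensional complex. Two degenerate cases also deserve a brief check. When $t=r$, we have $\sigma=\emptyset$, $\lk_K(\emptyset)=K$, and $\Delta_\emptyset\star H=H$, so the argument goes through unchanged. We also note that "contains" is read as containing a subcomplex isomorphic to the wheel, and the same argument handles this reading.
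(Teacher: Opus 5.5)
Your proof is correct and follows the paper's argument essentially step for step. A wheel $\Delta_\sigma\star H\subseteq K$ places the basic hole $H$ inside $\lk_K(\sigma)$, whose top homology is then nonzero because $B_t=0$; conversely, Lemma~\ref{basichole} extracts a basic hole from the link, which is joined back with $\Delta_\sigma$. Your explicit check that $\lk_K(\sigma)$ is pure $t$-dimensional is a small, welcome addition: the paper only notes $\dim\lk_K(\sigma)=t$, while Lemma~\ref{basichole} is stated for pure complexes.
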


\begin{proof}
It is enough to prove the equivalent statement that $K$ contains generalized $(r,t)$-wheels if and only if there exists a face $\sigma\in K$ with $|\sigma|=r-t$ such that $\widetilde H_t(\lk_K(\sigma),\R)\ne 0$.
	
Suppose first that $K$ contains a generalized $(r,t)$-wheel.
Then there exists a face $\sigma\in K$ with $|\sigma|=r-t$ and a $t$-dimensional basic hole $H$ such that $\Delta_\sigma\star H\subseteq K$.
By the definition of the link, this implies $H\subseteq \lk_K(\sigma)$.
Since $H$ is a $t$-dimensional basic hole, it supports a nonzero $t$-cycle.
Regarded as a chain in $\lk_K(\sigma)$, this cycle is still nonzero.
Since $\dim\lk_K(\sigma)= t$, we have $B_t(\lk_K(\sigma),\R)=0$.
Therefore $\widetilde H_t(\lk_K(\sigma),\R)\ne0$.

Conversely, suppose that $\widetilde H_t(\lk_K(\sigma),\R)\ne 0$ for some face $\sigma\in K$ with $|\sigma|=r-t$.
Since $\dim\lk_K(\sigma)= t$, Lemma \ref{basichole} implies that $\lk_K(\sigma)$ contains a $t$-dimensional basic hole $H$ as a subcomplex.
Thus $H\subseteq \lk_K(\sigma)$.
By the definition of the link, $\Delta_\sigma\star H\subseteq K$.
Since $|\sigma|=r-t$, and $H$ is a $t$-dimensional basic hole, this is a generalized	$(r,t)$-wheel.
Thus $K$ contains generalized $(r,t)$-wheels.
\end{proof}

\begin{lemma}\label{le-t}
Let $K$ be a pure $r$-dimensional complex without generalized $(r,t)$-wheels, where $1 \le t \le r$.
Then
\begin{equation}\label{Eq-le-t}
|N^\dn(F,u)|\le t
\end{equation}
for every $r$-face $F\in S_r(K)$ and every vertex $u\in V(K)\setminus F$.
\end{lemma}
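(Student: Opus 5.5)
We argue by contradiction: if $|N^\dn(F,u)|\ge t+1$ for some $r$-face $F$ and vertex $u\notin F$, we exhibit a generalized $(r,t)$-wheel inside $K$.

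\textbf{Describing the down neighbors.} Each $F'\in N^\dn(F,u)$ shares an $(r-1)$-face with $F$ and contains $u\notin F$. Hence $F'=(F\setminus\{v\})\cup\{u\}$ for a unique vertex $v\in F$. So $N^\dn(F,u)$ corresponds to a set $W\subseteq F$ of removed vertices, with $|N^\dn(F,u)|=|W|$. Assume $|W|\ge t+1$ and fix a subset $W_0\subseteq W$ with $|W_0|=t+1$.

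\textbf{Building the wheel.} Put $\sigma=F\setminus W_0$, so $|\sigma|=r+1-(t+1)=r-t$. Let $H$ be the complex on the $t+2$ vertices $W_0\cup\{u\}$ whose facets are the $t$-faces $W_0$ and $(W_0\setminus\{v\})\cup\{u\}$ for $v\in W_0$. These are all $t+2$ of the $t$-subsets of $W_0\cup\{u\}$, so $H=\Delta_{t+2}^{t+1}$, the boundary of a $(t+1)$-simplex. By Section~\ref{sec-2.3} this is a $t$-dimensional basic hole.

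\textbf{Checking containment.} We need $\Delta_\sigma\star H\subseteq K$. Since $K$ is closed under taking subsets, it suffices to show that $\sigma\cup G$ is a face of $K$ for every facet $G$ of $H$.
\begin{itemize}
\item For $G=W_0$, we have $\sigma\cup W_0=F\in K$.
\item For $G=(W_0\setminus\{v\})\cup\{u\}$ with $v\in W_0$, we have $\sigma\cup G=(F\setminus\{v\})\cup\{u\}$. This lies in $N^\dn(F,u)\subseteq K$ because $v\in W_0\subseteq W$.
\end{itemize}
The vertex set of $H$ is disjoint from $\sigma$ because $u\notin F$. Therefore $\Delta_\sigma\star H$ is a generalized $(r,t)$-wheel in $K$, which is a contradiction.

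The case $t=r$ also works: then $\sigma=\emptyset$ and $\Delta_\emptyset=\{\emptyset\}$, so the wheel is just the hole $H$. There is no real obstacle in this argument. The only point needing care is that all $t+2$ facets of the sphere $\Delta_{t+2}^{t+1}$ are present, and this is guaranteed by choosing $W_0$ inside $W$.
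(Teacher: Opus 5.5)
Your proposal is correct and follows the paper's proof essentially verbatim. You pick a $(t+1)$-subset of exchangeable vertices, take $\sigma=F\setminus W_0$ as the core of size $r-t$, and observe that $F$ together with the $t+1$ exchanged faces forms $\Delta_\sigma\star\Delta_{t+2}^{t+1}\subseteq K$, a generalized $(r,t)$-wheel. Your explicit checks of vertex-disjointness and of the $t=r$ case are details the paper leaves implicit.
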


\begin{proof}
Assume, to the contrary, that there exists an $r$-face $F \in K$ and a vertex $u \in V(K) \setminus F$ such that $|N^{\dn}(F,u)| \ge t+1$.
Then there exists a $(t+1)$-subset $A \subseteq F$ such that
$$
(F\setminus\{x\})\cup\{u\}\in S_r(K)
$$
for every $x \in A $.
Let $C = F\setminus A$ and $Y=\{u\} \cup A$.
Then the facets $C\cup (Y\setminus\{u\})$ and $ C\cup(Y\setminus\{x\})$, for each $x \in A$, form a copy of
$$
\Delta_C\star \Delta_{t+2}^{t+1}.
$$
Since $|C|=r-t$, and $\Delta_{t+2}^{t+1}$ is a $t$-dimensional basic hole, this is a generalized $(r,t)$-wheel, which yields a contradiction.
\end{proof}

\subsection{\texorpdfstring{$t$}{t}-neighbor uniformity and spectral upper bound}
Now we consider the equality case in \eqref{Eq-le-t}, namely, $t$-neighbor uniformity, and derive a spectral upper bound of a simplicial complex without generalized wheels, related to the equality case of $t$-neighbor uniformity.

\begin{lemma}\label{eig-uni}
Let $K$ be a pure $r$-dimensional complex on $n$ vertices.
If $K$ is $t$-neighbor uniform, then
$$
\q_{r-1}(K)=tn-(t-1)(r+1).
$$
\end{lemma}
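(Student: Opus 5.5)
We will work with $Q_r^{\down}(K)$ instead of $Q_{r-1}^{\up}(K)$, since the two share their nonzero eigenvalues and hence their spectral radius. By \eqref{Q-dn}, for an $r$-face $F$ the row sum of $Q_r^{\down}(K)$ at $F$ is
\[
|F| + |N^\dn(F)| = (r+1) + |N^\dn(F)|.
\]
The key step is to count $|N^\dn(F)|$ using $t$-neighbor uniformity. Every down neighbor $F'$ of $F$ has the form $(F\setminus\{x\})\cup\{u\}$ for a unique vertex $u\in F'\setminus F$, so $u\in V(K)\setminus F$. Hence the sets $N^\dn(F,u)$, for $u\in V(K)\setminus F$, partition $N^\dn(F)$. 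This gives
\[
|N^\dn(F)|=\sum_{u\in V(K)\setminus F}|N^\dn(F,u)| = t\,(n-r-1),
\]
since $|V(K)\setminus F| = n-r-1$ and each term equals $t$.

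Consequently every row sum of the nonnegative matrix $Q_r^{\down}(K)$ equals $(r+1)+t(n-r-1) = tn-(t-1)(r+1)$. So the all-ones vector on $S_r(K)$ is an eigenvector with this eigenvalue.

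For a nonnegative matrix with constant row sum $c$, the spectral radius equals $c$. Indeed, $\rho \le \|Q\|_\infty = c$, and $c$ is itself an eigenvalue. No connectivity or irreducibility assumption is needed for this. It follows that $\q_{r-1}(K) = tn-(t-1)(r+1)$.

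The only point requiring care is the partition claim: we must check that $u$ is uniquely determined by $F'$ and that $F'\setminus F$ is a single vertex. Both hold because $|F\cap F'| = r$ for down neighbors. We should also confirm that $K$ has at least one $r$-face, which holds because $K$ is pure $r$-dimensional. Beyond this the argument is routine; I do not expect a real obstacle.
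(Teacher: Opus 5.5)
Your proposal is correct and follows the paper's proof exactly. Both arguments pass to $Q_r^{\down}(K)$, partition $N^\dn(F)$ by the outside vertex $u$ to get the constant row sum $(r+1)+t(n-r-1)$, and combine the all-ones eigenvector with the maximum-row-sum bound on the spectral radius of a nonnegative matrix; your explicit check that each down neighbor determines a unique $u$ is a detail the paper leaves implicit.
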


\begin{proof}
It is enough to consider $Q_r^\down(K)$.
Since $K$ is $t$-neighbor uniform, for each $r$-face $F$, $|N^{\dn}(F,u)| = t$ for any vertex $u\in V(K)\setminus F$.
Then the number of down neighbors of $F$ is
$$
\sum_{u\in V(K)\setminus F}|N^\dn(F,u)|=t(n-r-1).
$$
By \eqref{Q-dn}, every row sum of the nonnegative matrix $Q_r^\down(K)$ is
$$
(r+1)+t(n-r-1)=tn-(t-1)(r+1).
$$
The all-ones vector is therefore an eigenvector with this eigenvalue.
Since a nonnegative matrix has spectral radius at most its maximum row sum, the spectral radius of $Q_r^\down(K)$ is exactly $tn-(t-1)(r+1)$.
Thus, the same value is $\q_{r-1}(K)$.
\end{proof}

\begin{prop}\label{book}
The generalized $(r,t)$-book $\B_n(r,t)$ is $t$-neighbor uniform, generalized $(r,t)$-wheel-free, and $r$-down path connected.
Consequently,
\[
\q_{r-1}(\B_n(r,t))=tn-(t-1)(r+1).
\]
\end{prop}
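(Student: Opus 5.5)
Write $C$ for the fixed core of size $r+1-t$ and $W=V\setminus C$, so $|W|=n-r-1+t$. Every $r$-face of $\B_n(r,t)$ has the form $F=C\cup T$ with $T\in\binom{W}{t}$. The plan is to verify the three properties directly from this description. The spectral value then follows immediately from Lemma~\ref{eig-uni}.

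\emph{$t$-neighbor uniformity.} Fix $F=C\cup T$ and $u\in V\setminus F$. Since $C\subseteq F$, we have $u\in W\setminus T$. A down neighbor of $F$ containing $u$ has the form $(F\setminus\{x\})\cup\{u\}$ with $x\in F$. This set is a face of $\B_n(r,t)$ if and only if it contains $C$, which forces $x\in T$. Conversely, every $x\in T$ gives $C\cup (T\setminus\{x\})\cup\{u\}$, and this is an $r$-face because $T\setminus\{x\}\cup\{u\}$ is a $t$-subset of $W$. Hence $|N^\dn(F,u)|=|T|=t$.

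\emph{$r$-down path connectedness.} Two $r$-faces $C\cup T$ and $C\cup T'$ can be joined by swapping one vertex of $T\setminus T'$ for one vertex of $T'\setminus T$ at a time. Each swap gives a down neighbor, because the intermediate faces all contain $C$ and use $t$ vertices of $W$. This is just the standard connectivity of the Johnson graph $J(|W|,t)$.

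\emph{Wheel-freeness.} By Lemma~\ref{link}, it suffices to show that $\widetilde H_t(\lk(\sigma),\R)=0$ for every face $\sigma$ with $|\sigma|=r-t$. Write $\sigma=\sigma_C\cup\sigma_W$, where $\sigma_C\subseteq C$ and $\sigma_W\subseteq W$. Since $|\sigma|=r-t<r+1-t=|C|$, the set $C\setminus\sigma_C$ is nonempty. The link is
\[
\lk(\sigma)=\Delta_{C\setminus\sigma_C}\star \lk_{\Delta_W^{t}}(\sigma_W),
\]
which is the join of a nonempty simplex with another complex. Such a join is a cone, hence contractible, so all of its reduced homology vanishes.

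Alternatively, one can argue at the level of $t$-cycles. Pick a vertex $c\in C\setminus\sigma_C$. Every $t$-face of the link contains $c$, since each facet of $\B_n(r,t)$ contains all of $C$. A nonzero $t$-cycle $z$ would satisfy $\partial z=0$. However, the faces of $z$ with $c$ deleted appear in $\partial z$ with nonzero coefficients and cannot cancel, because distinct faces containing $c$ give distinct faces after deleting $c$. So no nonzero $t$-cycle exists.

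The only mildly delicate step is this link computation: one must check that every facet of the link genuinely contains $C\setminus\sigma_C$, and that this set is nonempty precisely because $|\sigma|=r-t<|C|$. Once that is done, Lemma~\ref{eig-uni} applied to the $t$-neighbor uniformity gives $\q_{r-1}(\B_n(r,t))=tn-(t-1)(r+1)$.
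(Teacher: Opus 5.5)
Your proposal is correct and follows the paper's proof essentially step by step: the same direct count for $t$-neighbor uniformity, the same one-swap-at-a-time down path, and Lemma~\ref{eig-uni} for the spectral value. The only difference is in wheel-freeness. The paper applies the apex observation (some $c\in C\setminus S$ lies in every $t$-face) directly to a putative basic hole $H$ with $\Delta_S\star H\subseteq \B_n(r,t)$, showing $H$ is a cone and so contradicting $\beta_t(H,\R)=1$; you apply the same observation to the whole link via Lemma~\ref{link} (or through your chain-level argument), which is equally valid.
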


\begin{proof}
Write
\[
\B_n(r,t)=\Delta_C\star \Delta_W^{t},
\]
where $|C|=r+1-t$ and $|W|=n-r-1+t$.
Then
\[
S_r(\B_n(r,t))=\{C\cup T:T\in\binom Wt\}.
\]
Let $F=C\cup T$ be an $r$-face and let $u\in W\setminus T$.
A down neighbor of $F$ containing $u$ must be obtained by replacing one 	vertex of $T$ with $u$, since every $r$-face of $\B_n(r,t)$ must contain the whole core $C$.
So, for each $x\in T$, the set
\[
C\cup (T\setminus\{x\})\cup\{u\}
\]
is an $r$-face of $\B_n(r,t)$.
Hence $|N^{\dn}(F,u)|=|T|=t$.
Then $\B_n(r,t)$ is $t$-neighbor uniform.
By Lemma \ref{eig-uni},
\[
\q_{r-1}(\B_n(r,t))=tn-(t-1)(r+1).
\]
	
Next, we show that $\B_n(r,t)$ contains no generalized $(r,t)$-wheels.
Suppose to the contrary that $\Delta_S \star H\subseteq \B_n(r,t)$,
where $|S|=r-t$ and $H$ is a $t$-dimensional basic hole.
Since $|C|=r+1-t>|S|$, choose $c\in C\setminus S$.
For every $t$-face $T$ of $H$, the set $S\cup T$ is an $r$-face of $\B_n(r,t)$.
Therefore $C\subseteq S\cup T$ for every $T\in S_t(H)$.
Since $c\notin S$, we have $c\in T$ for every $t$-face $T$ of $H$.
Thus, every facet of the pure $t$-dimensional complex $H$ contains $c$.
Moreover, if $\tau\in H$ and $c\notin\tau$, then $\tau$ is contained in some facet $T$ of $H$.
Since $c\in T$, we have $\tau\cup\{c\}\in H$.
Hence $H=\{c\}\star H[V(H)\setminus\{c\}]$, so $H$ is a cone with apex $c$.
A cone is contractible, and therefore all its reduced homology groups vanish.
In particular, $\widetilde H_t(H,\R)=0$.
This contradicts the equivalent homological characterization of a basic hole, namely $\beta_t(H,\R)=1$.
Hence $\B_n(r,t)$ is generalized $(r,t)$-wheel free.

Finally, we prove that $\B_n(r,t)$ is $r$-down path connected.
Indeed, let $C\cup T$ and $C\cup T'$ be two $r$-faces.
Write
\[
T\setminus T'=\{x_1,\ldots,x_m\},
\quad
T'\setminus T=\{y_1,\ldots,y_m\}.
\]
Define
\[
T_j=(T\setminus\{x_1,\ldots,x_j\})\cup\{y_1,\ldots,y_j\},
\quad 0\le j\le m.
\]
The two facets $C\cup T_{j-1}$ and $C\cup T_j$ differ in exactly one
external vertex.
Their intersection is
$C\cup (T_{j-1}\cap T_j)$,
which has cardinality $r$ and hence is an $(r-1)$-face.
Thus, consecutive facets in the sequence are down neighbors.
This gives an $r$-down path from $C\cup T$ to $C\cup T'$.
Hence $\B_n(r,t)$ is $r$-down path connected.
\end{proof}

\begin{thm}\label{uni-iff}
Let $K$ be a pure $r$-dimensional complex on $n$ vertices without generalized $(r,t)$-wheels.
Then
\begin{equation}\label{uni-iff-bound}
\q_{r-1}(K)\le tn-(t-1)(r+1).
\end{equation}
Furthermore, if $K$ is $r$-down path connected, then equality in \eqref{uni-iff-bound} holds if and only if $K$ is $t$-neighbor uniform.
\end{thm}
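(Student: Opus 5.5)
We work throughout with $Q_r^{\down}(K)$, since it has the same spectral radius as $Q_{r-1}^{\up}(K)$. By \eqref{Q-dn}, the row of $Q_r^{\down}(K)$ indexed by an $r$-face $F$ has sum
\[
(r+1)+|N^{\dn}(F)|=(r+1)+\sum_{u\in V(K)\setminus F}|N^{\dn}(F,u)|,
\]
since every down neighbor of $F$ contains exactly one vertex outside $F$. Lemma \ref{le-t} bounds each summand by $t$, so every row sum is at most $(r+1)+t(n-r-1)=tn-(t-1)(r+1)$. A nonnegative matrix has spectral radius at most its maximum row sum, and this gives \eqref{uni-iff-bound}.

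For the equality statement, the direction ``$t$-neighbor uniform $\Rightarrow$ equality'' is exactly Lemma \ref{eig-uni}. For the converse, assume $K$ is $r$-down path connected and $\q_{r-1}(K)=tn-(t-1)(r+1)$. Connectivity gives irreducibility: the off-diagonal entries of $Q_r^{\down}(K)$ are $1$ precisely for down-neighbor pairs, so its directed graph is the $r$-down adjacency graph, which is connected by assumption. The matrix is therefore nonnegative and irreducible. By Perron--Frobenius, it has a positive eigenvector $x$ for the spectral radius $\rho$.

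The key step is the standard fact that for an irreducible nonnegative matrix, $\rho$ equals the maximum row sum only if all row sums are equal. To prove it, let $R$ be the maximum row sum and choose $F$ minimizing $x_F$. Then
\[
\rho x_F=\sum_{G} Q_{FG}x_G\ge x_F\sum_G Q_{FG},
\]
so the row sum at $F$ is at most $\rho=R$. Pushing this requires the usual argument. A cleaner route is to take $y>0$ a left Perron vector and compute
\[
\rho\, y^\top\mathbf 1=y^\top Q\mathbf 1=\sum_F y_F\, r_F\le R\, y^\top\mathbf 1,
\]
where $r_F$ is the row sum at $F$. Equality $\rho=R$ with $y_F>0$ for all $F$ forces $r_F=R$ for every $F$. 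Since $Q_r^{\down}$ is symmetric, the left and right Perron vectors coincide, so this applies directly.

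Hence every $r$-face $F$ satisfies $\sum_u |N^{\dn}(F,u)|=t(n-r-1)$. Each of the $n-r-1$ terms is at most $t$ by Lemma \ref{le-t}, so each term equals $t$. This is exactly $t$-neighbor uniformity.

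The only delicate step is the equality case of the row-sum bound. It is handled by the Perron vector computation above and requires the irreducibility supplied by $r$-down path connectedness.
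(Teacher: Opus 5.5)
Your proof is correct and follows essentially the paper's argument. Lemma \ref{le-t} bounds each $|N^{\dn}(F,u)|$ by $t$, which caps every row sum of $Q_r^{\down}(K)$ at $tn-(t-1)(r+1)$, and in the equality case $r$-down path connectedness forces every row to attain this cap, which gives $t$-neighbor uniformity. The only variation is in that last step: the paper propagates the maximal Perron coordinate $f(F_0)=1$ along down paths, while you average the row sums against a strictly positive Perron vector obtained from irreducibility. You can simply delete the unfinished ``minimize $x_F$'' fragment.
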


\begin{proof}
The spectral radius $\q_{r-1}(K)$ is the spectral radius of $Q_r^\down(K)$.
Let $f$ be a nonnegative Perron vector of $Q_r^\down(K)$ associated with $\q_{r-1}(K)$.
Choose an $r$-face $F_0$ such that
$$
f(F_0)=\max\{f(F):F\in S_r(K)\}=1.
$$
By Lemma \ref{le-t}, for every $u\in V(K)\setminus F_0$,
\begin{equation}\label{localu}
|N^\dn(F_0,u)|\le t.
\end{equation}
Hence
\begin{equation}\label{local}
|N^\dn(F_0)|=\sum_{u\in V(K)\setminus F_0}|N^\dn(F_0,u)|\le t(n-r-1).
\end{equation}
Using \eqref{Q-dn}, we obtain
\begin{equation}\label{eq-r}
	\begin{split}
		\q_{r-1}(K) & = \q_{r-1}(K) f(F_0) \\
		&= (r+1) f(F_0) + \sum_{F' \in N^\dn(F_0)} f(F')\\
		&\le r+1 + t(n-r-1)= tn - (t-1)(r+1) .
	\end{split}
\end{equation}

Assume now that $K$ is $r$-down path connected and equality holds.
Then equality must hold in every inequality above.
In particular, $f(F')=1$ for every $F'\in N^\dn(F_0)$, and $|N^\dn(F_0,u)|=t$ for every $u\notin F_0$.
Now apply the same argument to any down neighbor $F$ of $F_0$ with $f(F)=1$.
The equality condition forces all down neighbors of $F$ to have Perron coordinate $1$, and also forces $|N^\dn(F,u)|=t$ for every $u\notin F$.
Since $K$ is $r$-down path connected, this propagation along down paths gives $f(F)=1$ for every $F\in S_r(K)$ and
$$
|N^\dn(F,u)|=t
$$
for every $r$-face $F$ and every $u\notin F$.
Thus, $K$ is $t$-neighbor uniform.
The converse is exactly Lemma \ref{eig-uni}.
\end{proof}

We also prove the absolute upper bound stated in Proposition \ref{absolute-bound}.

\begin{proof}[Proof of Proposition \ref{absolute-bound}]
Since $Q_{r-1}^{\up}(K)$ and $Q_r^{\down}(K)$ have the same nonzero eigenvalues,
it is enough to estimate the spectral radius of $Q_r^{\down}(K)$.
Let $F\in S_r(K)$ and let $u\in V(K)\setminus F$.
Every member of $N^\dn(F,u)$ is obtained by replacing one vertex of $F$ by $u$.
Since $|F|=r+1$, we have $|N^\dn(F,u)|\le r+1$.
Hence
\begin{equation}\label{NdF}
	|N^\dn(F)|	=	\sum_{u\in V(K)\setminus F}|N^\dn(F,u)|	\le	(r+1)(n-r-1).
\end{equation}
Let $f$ be a nonnegative Perron vector of $Q_r^{\down}(K)$ corresponding to $\q_{r-1}(K)$, normalized so that
$$f(F_0)=\max\{f(F):F\in S_r(K)\}=1.$$
Then, by \eqref{Q-dn} and \eqref{NdF},
\begin{equation}\label{CM}
\begin{aligned}
	\q_{r-1}(K) & 	= \q_{r-1}(K) f(F_0) \\
	& = (r+1)+\sum_{F'\in N^\dn(F_0)}f(F')\\
	& \le 	(r+1)+(r+1)(n-r-1) \\
    & \le 	(r+1)(n-r).
\end{aligned}
\end{equation}

Now assume that equality holds for a pure $r$-dimensional complex $K$.
Thus equality in \eqref{CM} holds throughout.
In particular, $f(F')=1$ for every $F'\in N^d(F_0)$, and
$|N^d(F_0,u)|=r+1$ for every $u\in V\setminus F_0$.
Equivalently, for every $x\in F_0$ and every $u\in V\setminus F_0$, the set
$(F_0\setminus\{x\})\cup\{u\}$ is an $r$-face of $K$.
We can apply the same argument to any $r$-face $F$ with $f(F)=1$.
Since $F$ also attains the maximum value of the Perron vector, the equality condition again forces
$f(F')=1$ for every $F'\in N^\dn(F)$, and
$|N^\dn(F,u)|=r+1$ for every $u\in V(K)\setminus F$.
	
We use local exchanges to prove that every $(r+1)$-subset of $V(K)$ is an $r$-face of $K$.
Let $T\in\binom{V(K)}{r+1}$.
If $T=F_0$, there is nothing to prove.
Otherwise, write
	\[
	F_0\setminus T=\{x_1,\ldots,x_m\},
	\qquad
	T\setminus F_0=\{u_1,\ldots,u_m\}.
	\]
Starting from $F_0$, define successively
	$F_j=(F_{j-1}\setminus\{x_j\})\cup\{u_j\}$, $1\le j\le m$.
By the propagation statement above, $F_1$ is an $r$-face of $K$ and $f(F_1)=1$.
Applying the same argument successively to $F_1,F_2,\ldots,F_{m-1}$, we obtain
	$F_j\in S_r(K)$ and $f(F_j)=1$ for every $1\le j\le m$.
Since $F_m=T$, we have $T\in S_r(K)$.
Thus $S_r(K)=\binom{V(K)}{r+1}$, and hence $K=\Delta_n^{r+1}$.
	
Conversely, if $K=\Delta_n^{r+1}$, then for every $r$-face $F$ and every $u\in V(K)\setminus F$, all sets $(F\setminus\{x\})\cup\{u\}$ with $x\in F$ are $r$-faces of $K$.
Hence $|N^\dn(F,u)|=r+1$ for every $u\in V(K)\setminus F$.
Therefore every row sum of $Q_r^{\down}(K)$ is
	\[
	(r+1)+(r+1)(n-r-1)=(r+1)(n-r),
	\]
which implies that $\q_{r-1}(\Delta_n^{r+1})=(r+1)(n-r)$ associated with an all-one vector.
\end{proof}

\section{Structure of \texorpdfstring{$t$}{t}-neighbor uniform complexes}
The equality case of the spectral bound in Theorem \ref{uni-iff} forces the local $t$-neighbor uniform condition.
The purpose of this section is to prove that, under the link-homology obstruction (or forbidding generalized wheels), for sufficiently large $n$, local exchange uniformity forces all facets to share a global common core.

Throughout this section, let $K$ be a pure $r$-dimensional complex on vertex set $V$.
Assume that $K$ is $r$-down path connected, generalized $(r,t)$-wheel-free, and $t$-neighbor uniform, and
\begin{equation}\label{n-large}
n\ge r+2+\binom{r+1}{t}\binom{r}{t}.
\end{equation}

\subsection{Co-graph complexes}

Let $G=(V,E)$ be a graph with $|V|=r+3$.
The \emph{co-graph complex associated with $G$}, denoted by $K_G$, is the $r$-dimensional pure complex on the vertex set $V$ whose $r$-faces are the complements of the edges of $G$:
$$
S_r(K_G)=\{V\setminus e:e\in E(G)\}.
$$
For an edge $e\in E(G)$, write $F_e:=V\setminus e$.
Then two $r$-faces $F_e$ and $F_{e'}$ are down neighbors if and only if $|e\cap e'|=1$.

\begin{lemma}\label{regular}
Let $V$ be a vertex set of size $r+3$.
Let $G$ be a graph on $V$ with at least one edge, and let $K_G$ be its co-graph complex.
For $0\le t\le r+1$, the complex $K_G$ is $t$-neighbor uniform if and only if the induced subgraph of $G$ on its non-isolated vertices is $(t+1)$-regular.
In this case, the number $i(G)$ of isolated vertices satisfies $0\le i(G)\le r+1-t$.
\end{lemma}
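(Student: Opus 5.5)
The plan is to translate the condition $|N^\dn(F,u)|=t$ directly into a statement about degrees in $G$. Fix an edge $e=\{a,b\}\in E(G)$, so that $F_e=V\setminus\{a,b\}$ is an $r$-face. The vertices of $V$ outside $F_e$ are exactly $a$ and $b$. Hence the condition ``$|N^\dn(F,u)|=t$ for every $u\in V(K_G)\setminus F$'' has to be checked only for $u\in\{a,b\}$. One subtlety comes first: $V(K_G)$ should be taken as the set of vertices actually lying in some $r$-face. A vertex $v$ lies in $F_e$ iff $v\notin e$. Since $G$ has at least one edge and $|V|=r+3\ge 4$, every vertex outside a given edge lies in that edge's face. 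A vertex $v$ could fail to lie in any face only if $v$ belongs to every edge of $G$, that is, $G$ is a star centred at $v$. I will keep this case in mind and handle it at the end.

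The key computation is as follows. A down neighbour of $F_e$ containing $a$ has the form $F_{e'}$ with $|e\cap e'|=1$ and $a\in F_{e'}$, i.e.\ $a\notin e'$. So $e'$ is an edge through $b$ other than $e$, and conversely each such edge gives one down neighbour containing $a$. Therefore
\[
|N^\dn(F_e,a)|=\deg_G(b)-1,\qquad |N^\dn(F_e,b)|=\deg_G(a)-1 .
\]
It follows that $K_G$ is $t$-neighbour uniform iff $\deg_G(a)=\deg_G(b)=t+1$ for every edge $\{a,b\}$. That is precisely the statement that every non-isolated vertex of $G$ has degree $t+1$. Since edges join only non-isolated vertices, this is the same as $(t+1)$-regularity of the induced subgraph on the non-isolated vertices. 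Both directions come out of this single identity.

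For the bound on $i(G)$, note that a $(t+1)$-regular graph needs at least $t+2$ vertices. So $r+3-i(G)\ge t+2$, which gives $i(G)\le r+1-t$, and $i(G)\ge 0$ is trivial.

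The only real obstacle is the degenerate situation where some vertex lies in no face. In the regular case this means $G$ is a star with every edge through $v$. Then the non-isolated part is regular only if it is a single edge, which forces $t=0$. This is consistent with the statement, since a single edge with $t=0$ satisfies both conditions. When $t\ge1$ it cannot occur at all. I would verify that the computation above still gives the correct equivalence there, because both sides reduce to ``$G$ has exactly one edge''.
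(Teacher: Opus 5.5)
Your core identity, $|N^\dn(F_e,a)|=\deg_G(b)-1$ and $|N^\dn(F_e,b)|=\deg_G(a)-1$, is exactly the paper's argument. Your bound on $i(G)$ is also the paper's.

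\textbf{The gap.} You introduce a ``subtlety'' and decide that $V(K_G)$ should be the set of vertices lying in some $r$-face. Under that convention the lemma is false, so your claimed check of the degenerate case does not go through.

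\emph{Counterexample 1.} Take $t\ge 1$ and let $G=K_{1,t+1}$, with centre $v$, leaves $b_1,\dots,b_{t+1}$, and $r+1-t$ isolated vertices. Then $v$ lies in no face. For $e_i=\{v,b_i\}$ the only outside vertex you would test is $b_i$, and $|N^\dn(F_{e_i},b_i)|=\deg_G(v)-1=t$. So $K_G$ is $t$-neighbor uniform in your sense, yet the non-isolated part $K_{1,t+1}$ is not regular.

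\emph{Counterexample 2.} If $G$ is a single edge, $K_G$ is a single facet with no outside vertices at all. It is then vacuously $t$-neighbor uniform for every $t$, while $K_2$ is $(t+1)$-regular only when $t=0$.

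So in the star case the uniformity side does not ``reduce to $G$ has exactly one edge''. For a star $K_{1,k}$ it holds when $k=1$ (for any $t$) and when $k=t+1$.

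\textbf{The fix.} Drop the subtlety. The paper defines $K_G$ on the full vertex set $V$, so $V\setminus F_e=e$ for every edge, and both conditions $\deg_G(a)-1=t$ and $\deg_G(b)-1=t$ are always imposed. Your single identity then gives both directions with no exceptional case. For a star, the condition at the centre $v$ forces $t=\deg_G(b_i)-1=0$, and the condition at $b_i$ then forces $k=1$.

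This is also the version the paper actually uses in Lemma~\ref{ex-same}. There, uniformity of $K[V_{w,x}]$ is checked for every $u'\in V_{w,x}\setminus F'$, that is, over the whole ground set, not only over vertices that happen to lie in some facet.
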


\begin{proof}
Let $e=\{u,v\}\in E(G)$, and let $F_e=V\setminus e$.
Consider the outside vertex $u\notin F_e$.
An $r$-face $F_{e'}$ belongs to $N^\dn(F_e,u)$ if and only if $F_{e'}$ is a down neighbor of $F_e$ and $u\in F_{e'}$.
Equivalently, $|e\cap e'|=1$ and $u\notin e'$.
Since $e=\{u,v\}$, this means that $e'$ is incident with $v$ and is different from $e$.
Therefore
$$
|N^\dn(F_e,u)|=\deg_G(v)-1.
$$
Similarly,
$$
|N^\dn(F_e,v)|=\deg_G(u)-1.
$$
Thus $K_G$ is $t$-neighbor uniform if and only if every endpoint of every edge of $G$ has degree $t+1$.
This is exactly the statement that the induced subgraph on the non-isolated vertices is $(t+1)$-regular.
If such a non-isolated vertex exists, it has $t+1$ neighbors; hence, the number of non-isolated vertices is at least $t+2$.
Since $|V|=r+3$, we have $i(G)\le r+1-t$.
\end{proof}

\begin{rmk}
Suppose the  complex $K_G$ in Lemma \ref{regular} is $t$-neighbor uniform.
The induced subgraph of $G$ on non-isolated vertices, denoted by $G^\circ$, is $(t+1)$-regular, but is not necessarily connected.
For example, $G^\circ$ consists of at least two $(t+2)$-cliques.
In this case, $r \ge 2t+1$.
So, if $r \le 2t$, then $G^\circ$ is connected.
From the proof of Lemma \ref{regular}, $G^\circ$ is connected if and only if $K_G$ is $r$-down path connected, equivalently, any two edges of $G$ are connected by a path.
\end{rmk}

\subsection{Local exchanges}

For an $r$-face $F\in S_r(K)$ and a vertex $u\in V\setminus F$, define the \emph{exchange set}
$$
A_F(u):=\{x\in F:(F\setminus\{x\})\cup\{u\}\in S_r(K)\},
$$
and the corresponding \emph{forbidden set}
$$
C_F(u):=F\setminus A_F(u).
$$
Since $K$ is $t$-neighbor uniform, $|A_F(u)|=t$ and $|C_F(u)|=r+1-t$.
The next lemma shows that $A_F(u)$ does not depend on $u$.

\begin{lemma}\label{ex-same}
For every $r$-face $F$ of $K$, there exists a fixed $t$-subset $A_F\subseteq F$ such that
$$
A_F(u)=A_F
$$
for every $u\in V\setminus F$.
\end{lemma}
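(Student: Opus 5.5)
The plan is to fix an $r$-face $F$ and two distinct outside vertices $u,v\in V\setminus F$, and to show $A_F(u)=A_F(v)$ by looking at the configuration on the $(r+3)$-set $W=F\cup\{u,v\}$ through the co-graph language of Lemma \ref{regular}.

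\textbf{Step 1 (the co-graph on $W$).} Let $G$ be the graph on $W$ whose edges are the complements in $W$ of the $r$-faces of $K$ contained in $W$, so that $K[W]^{(r)}$ is the co-graph complex $K_G$. Then:
\begin{itemize}
\item $F$ corresponds to the edge $\{u,v\}$;
\item $(F\setminus\{x\})\cup\{u\}$ corresponds to the edge $\{v,x\}$;
\item $(F\setminus\{y\})\cup\{v\}$ corresponds to the edge $\{u,y\}$.
\end{itemize}
Hence
\[
N_G(v)\setminus\{u\}=A_F(u),\qquad N_G(u)\setminus\{v\}=A_F(v).
\]

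\textbf{Step 2 ($G^\circ$ is $(t+1)$-regular).} Take an edge $e=\{a,b\}$ of $G$. Every member of $N^\dn(F_e,a)$ lies in $F_e\cup\{a\}\subseteq W$, so it is already a face of $K_G$. Thus the count $t$ given by $t$-neighbor uniformity of $K$ is the same as the count inside $K_G$. By the proof of Lemma \ref{regular}, this gives $\deg_G(b)=t+1$ for every endpoint $b$ of every edge. So $G^\circ$ is $(t+1)$-regular, and the lemma reduces to showing that the adjacent vertices $u,v$ have the same neighborhood in $G$.

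\textbf{Step 3 (translating wheel-freeness).} For $\sigma\subseteq W$ with $|\sigma|=r-t$, the link of $\sigma$ in $K_G$ is the co-graph complex of $G[W\setminus\sigma]$, a graph on $t+3$ vertices. Wheel-freeness forbids this link from containing a $t$-dimensional basic hole. In particular:
\begin{itemize}
\item $G$ contains no $K_{2,t+1}$ whose $t+3$ vertices are exactly $W\setminus\sigma$, since that link would be $\diamondsuit$;
\item more generally, no induced graph on $t+3$ vertices has a co-graph complex carrying a $t$-cycle.
\end{itemize}

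\textbf{Step 4 (the main obstacle, and where \eqref{n-large} enters).} Local information on $W$ alone cannot suffice. For $t=1$, take $G=C_5$ plus isolated vertices. Then $K_G$ is $1$-neighbor uniform and wheel-free, every induced $4$-vertex subgraph is a path with acyclic co-graph, yet adjacent vertices of $C_5$ have different neighborhoods.

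So I must use vertices outside $W$, and this is where \eqref{n-large} is needed. There are at least $1+\binom{r+1}{t}\binom{r}{t}$ outside vertices, and only $\binom{r+1}{t}$ possible exchange sets $A_F(w)\subseteq F$. By pigeonhole, many outside vertices $w$ share a common exchange set $S$.

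For two such $w,w'$, apply Step 2 to $F\cup\{w,w'\}$: both $w$ and $w'$ have neighborhood $S$ plus each other. I would then compare an arbitrary $u$ against such a pair $w,w'$, working in the $(r+4)$-set $F\cup\{u,w,w'\}$ and in the faces $(F\setminus\{x\})\cup\{u\}$. The aim is to use the additional factor $\binom{r}{t}$, which counts choices of exchange sets of the faces $(F\setminus\{x\})\cup\{u\}$ inside an $r$-set, to find two vertices with $t+1$ common neighbors spanning a $K_{2,t+1}$ on exactly $t+3$ vertices. That is a $\diamondsuit$-wheel in some link, which is forbidden, unless $A_F(u)=S$.

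Concretely, I would first try to prove that every $u$ satisfies $A_F(u)=S$ for the majority class $S$. This forces all exchange sets to agree. I expect the delicate part to be exactly this counting: ruling out the cycle-like configurations of the $C_5$ type once many twin vertices are available.
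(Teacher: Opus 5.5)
Your Steps 1--3 are correct, and the $C_5$ example is a good check that \eqref{n-large} cannot be dropped. The gap is that the lemma itself lives entirely in Step 4, and there you only state an aim. You never show how a vertex $u$ whose exchange set differs from the majority set $A$ produces a $K_{2,t+1}$ on some $(t+3)$-set $W\setminus\sigma$.

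The one concrete hint you give for the second pigeonhole also points the wrong way. Take $x\in A_F(u)$, put $F_3:=(F\setminus\{x\})\cup\{u\}$, and pigeonhole majority vertices $w$ by $A_{F_3}(w)$. Two vertices $w,w'$ with a common set $P$ then only give, on $F_3\cup\{w,w'\}$, $N(w)=\{w'\}\cup P$ and $N(w')=\{w\}\cup P$. These are adjacent twins with $t$ common neighbors, which is consistent with $(t+1)$-regularity and is not a $K_{2,t+1}$. The reason is that in the co-graph on $F\cup\{u,w\}$ the vertex $x$ is always adjacent to $w$ (the complement of $\{x,w\}$ is $F_3$). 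So the neighborhood you pigeonhole contains the varying majority vertex and can never be common to two co-graphs.

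The missing idea is to pivot on a vertex $s\in A\setminus A_F(u)$: a slot every majority vertex can use but $u$ cannot. Let $U$ be the majority class, with $|U|\ge\binom rt+1$ by \eqref{n-large}. For $x\in U$, consider the co-graph on $F\cup\{u,x\}$.
\begin{itemize}
\item $s$ is adjacent to $u$: the complement of $\{s,u\}$ is $(F\setminus\{s\})\cup\{x\}$, a facet because $s\in A_F(x)$.
\item $s$ is not adjacent to $x$: the complement of $\{s,x\}$ is $(F\setminus\{s\})\cup\{u\}$, not a facet because $s\notin A_F(u)$.
\end{itemize}
By $(t+1)$-regularity, $N(s)=\{u\}\cup P_x$ with $P_x$ a $t$-subset of the $r$-set $F\setminus\{s\}$; in fact $P_x=A_{(F\setminus\{s\})\cup\{x\}}(u)$.

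This is where the factor $\binom rt$ enters. Some $x\ne y$ in $U$ have $P_x=P_y=:P$, so $N:=\{u\}\cup P$ is literally the same neighborhood in both co-graphs. Now the fixed deviant $u$ sits among the common neighbors, and $x,y$ are the poles. Put $\sigma:=F\setminus(\{s\}\cup P)$, so $|\sigma|=r-t$. The $2(t+1)$ sets $\sigma\cup\{x\}\cup(N\setminus\{a\})$ and $\sigma\cup\{y\}\cup(N\setminus\{a\})$, $a\in N$, are facets. Hence $\Delta_\sigma\star H\subseteq K$ with $H\cong\Delta_2^{1}\star\Delta_{t+1}^{t}\cong\diamondsuit_{t+3}^{t+1}$, a generalized $(r,t)$-wheel.

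So your $K_{2,t+1}$ does exist, but on the $(r+3)$-set $(F\setminus\{s\})\cup\{u,x,y\}$, not on a set of the form $F\cup\{\cdot,\cdot\}$. This contradiction gives $A_F(u)=A$ for every $u\notin F$, and it is exactly the paper's argument.
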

\begin{proof}
Fix an $r$-face $F\in S_r(K)$.
For every $u\in V\setminus F$, the set $A_F(u)$ is a $t$-subset of $F$.
There are only $\binom{r+1}{t}$ possible such subsets.
By the lower bound on $n$ in \eqref{n-large},
$$
|V\setminus F|=n-r-1\ge \binom{r+1}{t}\binom rt+1.
$$
Hence, by the pigeonhole principle, there exists a $t$-subset $A\subseteq F$ and a set $U\subseteq V\setminus F$ with
$$
|U|\ge \binom rt+1
$$
such that $A_F(x)=A$ for all $x\in U$.

Suppose to the contrary that there exists a $w \in V \setminus F$ such that $A_{F}(w) \neq A$.
For convenience, denote $A_{F}(w) := A_w$.
By assumption, $w \notin U$ and $A \setminus A_w$ is a non-empty set.
Hence, there exists at least one vertex $s \in A \setminus A_w$.

For any vertex $x \in U$,
let $V_{w,x} := F\cup \{ w,x\}$, and let $G_{w,x}$ be the graph on vertex set $V_{w,x}$ such that $K[ V_{w,x} ] $ is the co-graph complex associated with the graph $G_{w,x}$, namely, $e \in E(G_{w,x})$ if and only if $V_{w,x} \setminus e \in S_r(K[V_{w,x}])$.
We claim that $K[V_{w,x}]$ is $t$-neighbor uniform.
Indeed, let $F'\in S_r(K[V_{w,x}])$ and let
$u'\in V_{w,x}\setminus F'$.
For every $a\in F'$, the possible exchange face
$(F'\setminus\{a\})\cup\{u'\}$
is contained in $V_{w,x}$.
Since $K[V_{w,x}]$ is induced, this exchange face belongs to
$K[V_{w,x}]$ if and only if it belongs to $K$.
Therefore the number of valid exchanges in $K[V_{w,x}]$ equals the number of valid exchanges in $K$.
Since $K$ is $t$-neighbor uniform, $K[V_{w,x}]$ is also $t$-neighbor uniform.

By Lemma \ref{regular},
the induced subgraph of $G_{w, x}$ on its non-isolated vertices must be $(t+1)$-regular.
Since $s \in A \subseteq F$ and $s\notin A_w$, we have $F\setminus \{s\} \cup \{x\} \in S_r(K[ V_{w,x} ])$ and $F\setminus \{s\} \cup \{w\} \notin S_r(K[ V_{w,x} ])$, i.e.,
$\{s,w\}\in E(G_{w,x})$ and $\{s,x\}\notin E(G_{w,x})$.
Thus $s$ is not isolated in $G_{w,x}$.
Since the non-isolated part of $G_{w,x}$ is $(t+1)$-regular, the vertex
$s$ has exactly $t+1$ neighbors.
One of these neighbors is $w$, while $x$ is not a neighbor of $s$.
Hence the remaining $t$ neighbors of $s$ all lie in $F\setminus\{s\}$.
Denote this $t$-set by $P_{s,x}$.
Then
\[
N_{G_{w,x}}(s)=\{w\}\cup P_{s,x}.
\]
There are at most $\binom{r}{t}$ possible choices for $P_{s,x}$.
Since $	|U| \ge \binom{r}{t} +1$, the pigeonhole principle guarantees the existence of two distinct vertices $x, y\in U$ such that $P_{s,x} = P_{s,y}$.
For convenience, let $P := P_{s, x} = P_{s, y} $, and let	$S= F \setminus (\{s\} \cup P)$.

From the preceding discussion, we have $ N_{G_{w,x}}(s)=N_{G_{w,y}}(s)$; let us denote this common neighborhood by $N(s)$.
Thus, $|P| = t$, $|N(s)| = t+1$, $|S| = r-t$.
For every vertex $a \in N(s)$, the presence of the edge $sa\in E(G_{w,x})$ implies
$$
V_{w,x} \setminus \{s,a\} = S\cup\{x\}\cup(N(s) \setminus\{a\}) \in S_r(K).
$$
Similarly, 	applying the same reasoning to the edge  $sa\in E(G_{w,y})$ gives
\[
V_{w,y}\setminus\{s,a\}
=
S\cup\{y\}\cup (N(s)\setminus\{a\})
\in S_r(K).
\]
Since $K$ is closed under taking subsets, all sets
$$\{x\}\cup(N(s)\setminus\{a\}), \quad \{y\}\cup(N(s)\setminus\{a\})$$
are $t$-faces of $K$ for every $a\in N(s)$.
Let $H$ be the $t$-dimensional complex generated by these $2(t+1)$ $t$-faces.
Consequently, $H \cong \Delta_2^{1} \star \Delta_{t+1}^{t} \cong \diamondsuit_{t+3}^{t+1}$.
Thus $H$ is a $t$-dimensional basic hole.
Moreover, for every facet $T$ of $H$, the preceding construction gives
$S\cup T\in S_r(K)$.
Since $K$ is a simplicial complex, it follows that
\[
\Delta_S\star H\subseteq K.
\]
As $|S|=r-t$ and $H$ is a $t$-dimensional basic hole,
$\Delta_S\star H$ is a generalized $(r,t)$-wheel contained in $K$,
contrary to the assumption that $K$ is generalized $(r,t)$-wheel free.
Hence $A_F(u)=A$ for every $u\in V\setminus F$.
Taking $A_F:=A$ proves the lemma.
\end{proof}

\subsection{The global common core}

For each $r$-face $F$, let $A_F$ be the fixed exchange set given by Lemma \ref{ex-same}, and define
$$
C_F:=F\setminus A_F.
$$
Thus $|C_F|=r+1-t$.

\begin{lemma}\label{core}
There exists a fixed subset $C\subseteq V$ with $|C|=r+1-t$ such that every $r$-face of $K$ contains $C$.
\end{lemma}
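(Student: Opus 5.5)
The plan is to show that the core $C_F=F\setminus A_F$ is invariant under moving to a down neighbor, and then to use the $r$-down path connectedness of $K$ to spread this to every $r$-face. Once all the sets $C_F$ coincide, the common set $C$ has $|C|=r+1-t$, and every $r$-face contains it by construction.

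\emph{Step 1 (local invariance).} Let $F,F'\in S_r(K)$ be down neighbors, so $F'=(F\setminus\{x\})\cup\{u\}$ with $x\in F$ and $u\notin F$.
\begin{itemize}
\item Since $F'\in S_r(K)$, we have $x\in A_F(u)$. By Lemma \ref{ex-same}, $A_F(u)=A_F$, so $x\in A_F$. Hence $x\notin C_F$, and therefore $C_F\subseteq F\setminus\{x\}\subseteq F'$.
\item Take any $c\in C_F$ and use the vertex $x\in V\setminus F'$ as the outside vertex for $F'$. Then
\[
(F'\setminus\{c\})\cup\{x\}=(F\setminus\{c\})\cup\{u\}.
\]
This set is an $r$-face of $K$ if and only if $c\in A_F(u)=A_F$, which fails because $c\in C_F$.
\item So $c\notin A_{F'}(x)$. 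By Lemma \ref{ex-same} applied to $F'$, $A_{F'}(x)=A_{F'}$, so $c\in C_{F'}$.
\item This gives $C_F\subseteq C_{F'}$. Both sets have size $r+1-t$, so $C_F=C_{F'}$.
\end{itemize}

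\emph{Step 2 (globalization).} Since $K$ is $r$-down path connected, any two $r$-faces are joined by an $r$-down path. Applying Step 1 along each edge of the path shows that $C_F$ is the same set $C$ for all $F\in S_r(K)$. Then $C\subseteq F$ for every $r$-face $F$, and $|C|=r+1-t$.

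The only delicate point is Step 1. It needs the independence of $A_F(u)$ from $u$ (Lemma \ref{ex-same}, which is where the hypothesis \eqref{n-large} enters) in two places: once for $F$, to read off $x\in A_F$ from the single vertex $u$, and once for $F'$, to identify $A_{F'}$ with $A_{F'}(x)$ for the particular outside vertex $x$. Without that lemma, the exchange sets could depend on the outside vertex and the core could drift along a down path.
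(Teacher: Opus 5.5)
Your proof is correct and follows essentially the same route as the paper. You show that $C_F$ is invariant under a single down-neighbor step, using Lemma~\ref{ex-same} for both $F$ and $F'$ together with the identity $(F'\setminus\{c\})\cup\{x\}=(F\setminus\{c\})\cup\{u\}$, and then propagate along $r$-down paths; the only cosmetic difference is that you prove $C_F\subseteq C_{F'}$ directly and conclude by cardinality, whereas the paper argues by contradiction from some $y\in C\setminus C_{F'}$.
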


\begin{proof}
	By Lemma \ref{ex-same}, for every $r$-face $F\in S_r(K)$, there exists a fixed $t$-subset $A_F\subseteq F$ such that $A_F(u)=A_F	$ for every $u\in V\setminus F$. Define $C_F:=F\setminus A_F$.
Then $|C_F|=r+1-t$.
Fix an $r$-face $F_0\in S_r(K)$, and set $	C:=C_{F_0}$.
We prove that $C_F=C$ for every $F\in S_r(K)$.

It is enough to prove that this equality propagates along one down-neighbor step.
Let $F\in S_r(K)$ satisfy $C_F=C$,
and let $F'$ be a down neighbor of $F$.
Write $ F'=(F\setminus\{x\})\cup\{u\}$,
where $x\in F$ and $u\in V\setminus F$.
Since $F'\in S_r(K)$, by the definition of the exchange set we have $x\in A_F(u)=A_F$.
Hence $ x\notin C_F=C$.
Since $C\subseteq F$ and $x\notin C$, it follows that $ C\subseteq F'$.

We claim that $C_{F'}=C$.
Suppose, to the contrary, that	$C_{F'}\ne C$.
Since both $C_{F'}$ and $C$ have cardinality $r+1-t$, and both are contained in $F'$, there exists	$y\in C\setminus C_{F'}$.
Moreover, $x\notin F'$.
By Lemma \ref{ex-same} applied to the facet $F'$, the set $C_{F'}$ is independent of the outside vertex.
Therefore $ C_{F'}=F'\setminus A_{F'}(x)$.
Since $y\notin C_{F'}$, we have	$y\in A_{F'}(x)$.
Thus
$$(F'\setminus\{y\})\cup\{x\} = 	(F\setminus\{y\})\cup\{u\}\in S_r(K).$$
This implies $y\in A_F(u)$.
On the other hand, since $y\in C=C_F=F\setminus A_F$, we have $y\notin A_F=A_F(u)$,
a contradiction.
Hence $C_{F'}=C.$

Now let $G\in S_r(K)$ be arbitrary.
Since $K$ is $r$-down path connected, there exists a sequence of $r$-faces $F_0,F_1,\ldots,F_m=G$
such that $F_{i+1}$ is a down neighbor of $F_i$ for every $0\le i \le m-1$.
We have just shown that if	$C_{F_i}=C$ then $C_{F_{i+1}}=C$.
Since $C_{F_0}=C$, induction gives $C_G=C$.
Therefore every $r$-face $G$ contains the fixed set $C$.
\end{proof}

\subsection{Completion to the generalized book}

\begin{lemma}\label{ex-complete}
Let $C$ be the fixed core from Lemma \ref{core}, and put $W:=V\setminus C$.
Then
$$
S_r(K)=\{C\cup T:T\in\binom Wt\}.
$$
\end{lemma}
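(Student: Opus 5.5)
We show the two inclusions separately. The inclusion $S_r(K)\subseteq\{C\cup T:T\in\binom Wt\}$ is immediate from Lemma \ref{core}: every $r$-face $F$ contains $C$, so $F=C\cup T$ with $T=F\setminus C\subseteq W$ and $|T|=r+1-(r+1-t)=t$. For the reverse inclusion we show that every $t$-subset $T\subseteq W$ gives an $r$-face $C\cup T$, by walking from a fixed facet to $C\cup T$ through single-vertex exchanges, as in the proof of Proposition \ref{absolute-bound}.

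The key observation is that for every $r$-face $F$ we have $C_F=C$, which the proof of Lemma \ref{core} establishes for all facets via down-path propagation. Hence $A_F=F\setminus C=F\cap W$. By Lemma \ref{ex-same}, $A_F(u)=A_F$ for every $u\in V\setminus F$. Therefore, for every $x\in F\cap W$ and every $u\in W\setminus F$, the set $(F\setminus\{x\})\cup\{u\}$ is an $r$-face of $K$. Moreover, this new face again contains $C$, since $x\notin C$.

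Now fix any $r$-face $F_0=C\cup T_0$ and a target $t$-set $T\subseteq W$. Write
\[
T_0\setminus T=\{x_1,\ldots,x_m\},\qquad T\setminus T_0=\{u_1,\ldots,u_m\},
\]
and set $F_j=(F_{j-1}\setminus\{x_j\})\cup\{u_j\}$ for $1\le j\le m$. We argue by induction on $j$. At step $j$, the face $F_{j-1}$ is an $r$-face. The vertex $x_j$ lies in $F_{j-1}\cap W=A_{F_{j-1}}$, because $x_j$ has not yet been removed. The vertex $u_j$ lies in $W\setminus F_{j-1}$, because $u_j\notin T_0$ and has not yet been added. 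By the observation above, $F_j$ is an $r$-face. Since $F_m=C\cup T$, this gives the reverse inclusion.

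The only delicate point is making sure that $C_F=C$, and hence $A_F=F\cap W$, holds for \emph{every} facet and not only for $F_0$. This is exactly what the down-path induction in Lemma \ref{core} proves, so no new argument is needed. Nonemptiness of $S_r(K)$ is automatic, since $K$ is pure $r$-dimensional. Also $|W|=n-r-1+t\ge t$, so the family $\binom Wt$ is nonempty.
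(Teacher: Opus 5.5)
Your proof is correct and follows the same plan as the paper: you show that a single exchange $C\cup T\to C\cup\bigl((T\setminus\{x\})\cup\{u\}\bigr)$ with $x\in T$, $u\in W\setminus T$ always gives a facet, and then walk from a fixed facet to $C\cup T$. The only difference is how you justify $A_F(u)=F\setminus C$. You use the identity $C_F=C$ proved inside the proof of Lemma \ref{core} together with Lemma \ref{ex-same}. The paper uses only the statement of Lemma \ref{core}: exchanging out a vertex of $C$ cannot give a facet, so $A_F(u)\subseteq T$, and then $|A_F(u)|=t=|T|$ by $t$-neighbor uniformity.
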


\begin{proof}
By Lemma \ref{core}, every $r$-face of $K$ contains $C$.
Hence every $r$-face of $K$ is of the form $C\cup T$ for some $T\in\binom{W}{t}$.
Let $\mathcal{T}$ be the family of all valid external $t$-subsets that can form a facet with $C$:
$$
\mathcal{T}=
\{T \in \binom{W}{t}: C \cup T\in S_r(K)\}.
$$

Since $K$ is pure $r$-dimensional, $\mathcal T$ is nonempty.
We shall prove that $\mathcal T$ is closed under local exchanges.
Let $T\in\mathcal T$ and let $T'=(T\setminus\{x\})\cup\{u\}$, where $x\in T$ and $u\in W \setminus T$.
Set $F:=C\cup T$.
Then $F\in S_r(K)$ and $u\notin F$.	
We first observe that $A_F(u)=T$.
Indeed, if $c\in C$, then $(F\setminus\{c\})\cup\{u\}$ does not contain $C$, and therefore it cannot be an $r$-face of $K$ by Lemma \ref{core}.
Thus $A_F(u)\cap C=\emptyset$, and hence $A_F(u)\subseteq F\setminus C=T$.
Since $K$ is $t$-neighbor uniform, $|A_F(u)|=t$.
Since $|T|=t$, we obtain $A_F(u)=T$.
Therefore
$$(F\setminus\{x\})\cup\{u\} =C\cup(T\setminus\{x\})\cup\{u\}=C\cup T'\in S_r(K).$$
Hence $T'\in\mathcal T$.

We now use local exchanges to show $\mathcal T=\binom{W}{t}$.
Let $T_0\in \mathcal T$ and let $T \in\binom{W}{t}$ be arbitrary.
If $T_0=T$, there is nothing to prove.
Otherwise, choose an element $x\in T_0\setminus T$ and an element
$u\in T\setminus T_0$.
By the exchange closure proved above,
\[
(T_0\setminus\{x\})\cup\{u\}\in\mathcal T.
\]
This operation increases the size of the intersection with $T$ by one.
Repeating the argument finitely many times, we obtain $T\in\mathcal T$.
Since $T$ was arbitrary, $\mathcal T=\binom{W}{t}$.
Therefore
\[
S_r(K)=\{C\cup T:T\in\binom Wt\}.
\]
\end{proof}

Now we can prove the main result of this paper.
By Lemma \ref{link}, Theorem \ref{main} is equivalent to Corollary \ref{wheel-free-main}.
So we only prove Corollary \ref{wheel-free-main} here.

\begin{proof}[Proof of Corollary \ref{wheel-free-main}]
By Theorem \ref{uni-iff}, we get the desired upper bound
$$
\q_{r-1}(K)\le tn-(t-1)(r+1).
$$

Assume now that $K$ is $r$-down path connected and that \eqref{n-large} holds.
If equality holds, then Theorem \ref{uni-iff} implies that $K$ is $t$-neighbor uniform.
By Lemma \ref{core}, there exists a fixed subset $C\subseteq V(K)$ with $|C|=r+1-t$ such that every $r$-face of $K$ contains $C$.
Let $W:=V(K)\setminus C$.
Then $|W|=n-r-1+t$.
By Lemma \ref{ex-complete},
$$
S_r(K)=\left\{C\cup T:T\in\binom{W}{t}\right\}.
$$
Therefore
$$
K\cong \Delta_{r+1-t} \star \Delta_{n-r-1+t}^{t} =\B_n(r,t).
$$

Conversely, suppose $K\cong \B_n(r,t)$.
By Proposition \ref{book}, $\B_n(r,t)$ is $r$-down path connected, generalized $(r,t)$-wheel-free and
$\q_{r-1}(\B_n(r,t))=tn-(t-1)(r+1)$.
The sufficiency follows.
\end{proof}

\end{document}